\theoremstyle{plain}
\newtheorem{theorem}{Theorem}[section]
\newtheorem{lemma}[theorem]{Lemma}
\newtheorem{proposition}[theorem]{Proposition}
\newtheorem{corollary}[theorem]{Corollary}
\theoremstyle{definition}
\newtheorem{definition}[theorem]{Definition}
\newtheorem{definition/construction}[theorem]{Definition/Construction}
\newtheorem{example}[theorem]{Example}
\newcommand{\Zed}{\mathbb Z} 
\newcommand{\Real}{{\mathbb R }}
\newcommand{\rank}{\mbox{\rm rank\,}}
\newcommand{\critp}{{\mathcal Critp}} 
\newcommand{\critv}{{\mathcal Critv}} 
\newcommand{\singp}{{\mathcal Singp}} 
\newcommand{\singv}{{\mathcal Singv}} 
\newcommand{\rep}{{\mathcal Rep}} 
\begin{document}

\title{Bi-filtrations and persistence paths for 2-Morse functions}
\author{Ryan Budney \\ Tomasz Kaczynski}

\address{
Mathematics and Statistics, University of Victoria PO BOX 3060 STN CSC, Victoria BC Canada V8W 3R4 \\
D\'epartement de math\'ematiques, Universit\'e de Sherbrooke, Sherbrooke, QC, Canada J1K 2R1}
\email{rybu@uvic.ca \\ Tomasz.Kaczynski@usherbrooke.ca}

\thanks{
\newline\noindent
Research of T.K.\ was supported by a Discovery Grant from NSERC of Canada.
\newline\noindent\noindent\emph{Primary class:} 57R35
\newline\noindent\emph{secondary class:} 55N31, 55M99
\newline\noindent\emph{keywords:} Filtration, Manifold, Persistent homology}

\begin{abstract}
This paper studies the homotopy-type of bi-filtrations of compact manifolds
induced as the pre-image of filtrations of $\Real^2$ for generic smooth functions $f : M \to \Real^2$.
The primary goal of the paper is to allow for a simple description of the multi-graded persistent homology
associated to such filtrations. The main result of the paper is a description of the evolution of the bi-filtration of $f$
in terms of cellular attachments.
An analogy of Morse-Conley equation and Morse inequalities along so called persistence paths are derived.
A scheme for computing path-wise barcodes is proposed.
\end{abstract}

\maketitle


\section{Introduction}\label{intro}

In the past two decades, the Morse theory of smooth functions on manifolds, and singularity theory, its extension to functions with multi-dimensional values, have driven a lot of attention in the applied mathematics and theoretical computer science communities  due to their applications in Imaging, Visualisation and, most recently, Topological Data Analysis (TDA). As those theories has been extensively developed for nearly one century, the new and potential applications bring different perspectives.

\medskip

Morse theory is a tool that allows one to use real-valued functions on a manifold to give a combinatorial description of that
manifold, in the language of handle decompositions or CW-complexes.  A topological model for $M$ is built following changes in
 sublevel sets $M_{g\leq y}=g^{-1}( (-\infty,y])$ of a Morse function (i.e.\ smooth and generic) $g:M\to \Real$. The central
  theorem \cite{Mil} about the filtration of $M$ by sublevel sets is that:
\begin{enumerate}
\item[(1)] The homotopy-type of $M_{g\leq y}$ does not change for $y \in [a,b]$ provided there are no critical values of $g$ in the interval $[a,b]$.
\item[(2)] If there is precisely one critical value in $[a,b]$ then the $M_{g\leq b}$ is obtained from $M_{g\leq a}$ by a handle attachment, which up to a homotopy-equivalence, is a cell attachment.
\end{enumerate}

\medskip

In Imaging, TDA, and related techniques of persistent homology, the interest shifts to the function itself. Its domain comes from interpolation of data and is most often a fairly well understood space such as $R^n$ or, if we seek compactness, a triangulated sphere $S^n$. That is a typical setting in works on the shape similarity by size function methods such as in \cite{BiCeFrGiLa}. When it comes to the study of functions with multi-dimensional values, there are new challenges and more differences between the classical singularity theory setting and the applied context.

\medskip

Given the success of Morse theory, the study of generic smooth mappings from manifolds to surfaces $f : M \to \Sigma$
is a natural next-step.
The most basic elements of the theory involves the description of the stratification of the manifold $M$ by the singularity
types, together with the local properties of the mapping around singular points.  This was first worked out by Whitney
\cite{Whit} (see also \cite{GP}) when $M$ is a surface, and fully generalized in the subsequent decades \cite{Wan, Saeki}.
Perhaps the main difference between the study of functions taking values in $\Real$ vs. in a surface is that the
set of fibers $\{ f^{-1}(a) \mid  a\in \Sigma\}$ lack a linear order on them: a poset relation has to be taken into account.
In contrast, the real numbers have the relatively canonical poset $\{ (\infty,a] \mid a \in \Real\}$ of half-infinite intervals.
\medskip

To be specific, let us state the posets studied in this paper. Consider $f: M \to \Real^2$, where $M$ is an
$m$-manifold of dimension $m \geq 2$, and the plane $\Real^2$ is endowed with the poset relation
\[
(a,b)\preceq (a',b')\iff a \leq a' \mbox{ and } b \leq b'.
\]
Any such function gives rise to a {\em bi-filtration}
of $M$ defined as the family $M_f=\{M_{(a,b)}\}_{(a,b)\in \Real^2}$ of subsets of $M$ given by
\[
M_{(a,b)}=\{p\in M\mid f(p)\preceq (a,b)\}.
\]
Equivalently, the sets $M_{(a,b)}$ are the preimages of the quadrants $C_{(a,b)}=(-\infty,a] \times (-\infty,b]$ under $f$.
They are nested with respect to inclusions, that is: $M_{(a,b)}\subseteq M_{(a',b')}$, for every $(a,b)\preceq (a',b')$.

\medskip

Persistence consists of analyzing homological changes occurring along the bi-filtration as the point $(a,b)$ varies. Note that the boundary $\partial C_{(a,b)}$ of the quadrant $C_{(a,b)}$ is not a submanifold of $\Real^2$: it can be viewed as a manifold with a corner.
The problem of bi-filtration has been addressed in the presented setting by Smale in 1975 \cite{Smale} and further investigated by Wan \cite{Wan}. As it is pointed out by Smale, the study is historically motivated by the {\em Pareto optimal problem} of simultaneously maximizing several functions. Our work is an extension of the work done in \cite{Smale,Wan}, with the same topic viewed from a different perspective.

\medskip

There has been a lot of progress in computing persistent homology for multi-filtrations which include function $g:M\to R^k$ as a special case, for any $1< k < \dim M$. We refer the reader to \cite{CZ09,CDFFC,CEFKL} and references therein. However, most of dimension-independent work on computing persistent homology, often in a discrete setting, is in a sense `geometry-blind', that is not giving much insight to the particular types of singularities one may encounter. Providing that insight is the main motivation for this paper. In particular, in \cite{AlKaLaMa19}, a Forman-type discrete analogy of multidimensional Morse functions is investigated. In the conclusion of that paper, it is pointed that an appropriate application-driven extension of the Morse theory to multifiltrations for smooth functions is not much investigated yet, and it would help in understanding the discrete analogy. The present work is a step in that direction. Moreover, Section~\ref{sec:per-paths} shows prospects for practical implementations. A study of discrete Forman type multidimensional Morse functions is currently under way by Landi at.\ al.\ for instance in \cite{Lan-Sca21}.

\medskip

This paper follows the above outline.  We begin Section \ref{sec:fold-cub-sing} with the definition of a $2$-Morse
function $f : M \to \Real^2$, following \cite{Wan, GK-indefinite}.  This allows us to define the (oriented)
signature invariant (see Definition \ref{2morsdef}).  We follow this definition with a few simple examples where
one can explicitly compute the homotopy-types of the filtration $M_{(a,b)}$ for all $(a,b) \in \Real^2$.
The main result of the paper follows.
\begin{theorem}\label{mainth-intro}
If $f : M \to \Real^2$ is a $2$-Morse function then the bi-filtration $M_f$
has singular points consisting entirely of corner and tail singular points.
\end{theorem}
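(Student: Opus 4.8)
The plan is to verify the statement locally, point by point, using the normal forms that accompany the definition of a $2$-Morse function in Section~\ref{sec:fold-cub-sing}. First I would dispose of the regular points of $f$: if $\rank df_p=2$, then by the implicit function theorem there are coordinates $(x_1,\dots,x_m)$ on a neighbourhood $U$ of $p$ and coordinates $(u,v)$ on a neighbourhood $V$ of $f(p)$ in which $f$ is the projection $(x_1,\dots,x_m)\mapsto(x_1,x_2)$; then $U\cap M_{(a,b)}=(-\infty,a]\times(-\infty,b]\times\Real^{m-2}$ for every $(a,b)\in V$, so the restricted bi-filtration is the trivial model near $p$ and $p$ is not a singular point of $M_f$. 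Hence every singular point of $M_f$ lies on the singular set $\Sigma(f)=\{p\in M:\rank df_p<2\}$, and it suffices to analyse $f$ near each point of $\Sigma(f)$.

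By the genericity built into a $2$-Morse function, $\Sigma(f)$ is a curve consisting of fold points together with isolated cusp points, and its image, the discriminant $\Delta=f(\Sigma(f))$, is an immersed curve with finitely many transverse self-intersections. I would treat these strata in turn. At a fold point there are coordinates in which $f(x_1,\dots,x_m)=(x_1,\,Q(x_2,\dots,x_m))$ with $Q$ a nondegenerate quadratic form, so $U\cap M_{(a,b)}$ is the product of the half-line $\{x_1\le a\}$, the quadric sublevel set $\{Q\le b\}$, and (when $m>2$) the remaining coordinates; by the classical Morse-theoretic picture recalled in the introduction, the homotopy-type of this product changes by a single cell attachment, whose dimension is read off from the index of $Q$, exactly as the corner $(a,b)$ passes the critical value in the appropriate poset direction. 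The key point is that one is \emph{not} free to change coordinates on $\Real^2$, since the poset is preserved only by monotone reparametrisations of the two axes separately; thus the genuine local invariant at a fold point $p$ is the index of $Q$ together with the position of the branch of $\Delta$ through $f(p)$ relative to the poset directions, i.e.\ whether that branch is a monotone increasing arc, a monotone decreasing arc, or an arc tangent to one of the two axis directions. Comparing these cases with the local models defining \emph{corner} and \emph{tail} singular points, I would check that a decreasing branch (which meets the quadrant $C_{(a,b)}$ only at its corner) gives a corner singular point, an increasing branch (a branch of $\Delta$ trailing off the corner into the interior of the quadrant) gives a tail singular point, and that the isolated fold points where $\Delta$ is tangent to an axis direction still realise one of these two models --- here invoking the genericity of $f$ to exclude higher-order tangencies.

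The main work, and the step I expect to be the real obstacle, is the analysis at the cusp points (together with the more routine analysis at the points over self-intersections of $\Delta$). At a cusp one has a normal form $f(x_1,\dots,x_m)=(x_1,\,x_2^3+x_1x_2+\varepsilon_3x_3^2+\dots+\varepsilon_mx_m^2)$, and since the sublevel sets $\{x_2^3+x_1x_2\le b\}$ are not quadric regions the Morse-lemma computation of the fold case does not transfer verbatim; instead I would show that, after shrinking, a neighbourhood of the cusp point decomposes into pieces on which the bi-filtration has already been recognised as corner-type or tail-type --- using that the two fold arcs issuing from the cusp are each monotone with respect to the poset directions near the cusp value, so that the growing quadrant $C_{(a,b)}$ meets their union either through its corner or along an edge, which reduces the homotopy bookkeeping to the fold case. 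At a double point of $\Delta$ the two fold sheets are transverse, so the local bi-filtration is a superposition of two fold models and is again governed entirely by corner and tail data. A secondary subtlety, already present in the fold case, is that ``reducing to the fold model'' must be made precise as an isomorphism of bi-filtrations on a neighbourhood --- tracking the corner-direction of the quadrants through every coordinate change --- rather than merely as a statement about homotopy-types of individual sublevel sets. Assembling the cases shows that every point of $\Sigma(f)$, hence every singular point of $M_f$, is a corner or a tail singular point.
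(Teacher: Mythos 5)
Your overall strategy --- reduce to the three normal forms of Definition \ref{2morsdef} and read off the behaviour of the bi-filtration from each local model --- is reasonable in spirit, but it has a gap at its first step that propagates through the rest of the argument. The singular points of the bi-filtration $M_f$ are points $(a,b)$ of $\Real^2$, and regularity of $(a,b)$ is a condition on the \emph{global} homotopy-type of $M_{(a',b')}$ for $(a',b')$ near $(a,b)$; it is not detected by the germ of $f$ at a single point of $M$, nor even at a single point of $f^{-1}(a,b)$. In particular the tail singular points form horizontal and vertical rays in $\Real^2$ that are disjoint from $\singv(f)$: the point $(a,b)$ can be a regular value of $f$ lying far from $\singv(f)$ and still be filtration-singular, because an edge of $\partial C_{(a,b)}$ has a ``kissing'' tangency with $\singv(f)$ somewhere in its interior. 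Your pointwise trivialization near regular points of $f$ therefore does not show that such $(a,b)$ fall under your case analysis, and to conclude anything about the filtration you must assemble the local models along the whole of $f^{-1}(\partial C_{(a,b)})$. The paper does exactly this in Lemma \ref{mainlem}, by constructing a single vector field on $\Real^2$, tangent to $\singv(f)$ where necessary, that lifts to $M$; the construction fails precisely when $\partial C_{(a,b)}$ fails to be transverse to $\singv(f)$, which happens only at corner crossings and kissing tangencies --- and that dichotomy is the content of the theorem.

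Two further points. First, you misidentify the configurations: an increasing branch of $\singv(f)$ through the corner $(a,b)$ is homotopy-regular (the paper notes explicitly that such arcs are not in $\critv$, since the quadrant boundary sweeps across them transversely), not a tail singularity; tail singular points come from horizontal or vertical tangencies of $\singv(f)$ in the interior of an edge, per the definition preceding Figure \ref{int1}. Second, the ``engulfing'' tangency, where $\singv(f)$ points into $C_{(a,b)}$ after the tangency, swallows an entire compact arc of fold values at once; the resulting change in homotopy-type is a cell attachment of index $i+1$, obtained in the proof of Theorem \ref{mainthm} by a Bott-style family handle $I\times D^i\times D^{m-i-1}$ glued along $\partial(I\times D^i)$. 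This is a global feature of the singular value set that the single-point fold model $(x_1,Q)$ cannot see; a purely local computation would report index $i$. Your reduction of the cusp analysis to the fold case is plausible and close to the paper's Case 2 (a family of cancelling handles, hence no change in homotopy-type), but it rests on the same unaddressed globalization.
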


In short, this theorem gives us a stratification of the plane $\Real^2$ such that the homotopy-type of
$M_{(a,b)}$ is constant in the connected co-dimension zero strata.  We follow that up by a description of
how the homotopy-type of $M_{(a,b)}$ changes as $(a,b)$ crosses a co-dimension one stratum.

Our Lemma \ref{mainlem} is the analogue of (1), in that it tells us that generically the homotopy-type
of the manifolds $M_{(a,b)} = f^{-1}(C_{(a,b)})$ is locally-constant.  The nature of the proof of Lemma \ref{mainlem} is
significant to the rest of the paper, describing a rather flexible technique of vector field flows, allowing
us to construct conjugate isotopies (i.e. fiber-preserving isotopies, also known as isotopies that are horizontal
diffeomorphisms with respect to the map $f$) in both $M$ and the plane $\Real^2$.  This allows us the freedom to
frame our remaining arguments in the language of how the homotopy-type of $f^{-1}(C_t)$ changes when
$C_t$ is an arbitrary 'smoothly-varying' $2$-manifold in the plane.
There are however some points in the plane where the homotopy-type of $M_{(a,b)}$ does change, this is described in Theorem \ref{mainthm}.  The main feature of Theorem \ref{mainthm}
is that the homotopy-type of $M_{(a,b)}$ changes via handle (or cell) attachments.  In the proof we see one of the
handle attachments comes directly from a classical Morse theory argument.  The second type of handle attachment uses global
features of the singular point set of $f$, and is perhaps best thought of as a Bott-style handle attachment.  We give a
brief account of Bott's variant of Morse theory. Proposition \ref{prop:char-crit-values} summarizes
elements of the proof of Theorem \ref{mainthm}, describing the dimension of the cell attachments in terms of the
oriented signature invariant.  One last feature of Section \ref{sec:fold-cub-sing} is
the observation that at 'cubic' points $(a,b) \in \Real^2$, while the homotopy-type of $M_{(a,b)}$ generally does not change,
the fibrewise homotopy-type (with respect to the map $f$) does change.  Roughly speaking these cubic points correspond to
pairs of cancelling handles (or cells).

\medskip

In Section \ref{sec:per-paths}, we turn our attention to the consequences of our study for the bi-filtered persistent homology.
We briefly review the known its goals and a technique of reducing its computation to one-dimensional persistence. In particular, the foliation method that has been introduced in \cite{BiCeFrGiLa} for size functions, used in \cite{CaDiFe,CDFFC} for multidimensional persistent homology and later named as fibered barcode in the context of persistence modules \cite{Les-Wri15}.
As it is visible in examples of Section \ref{sec:fold-cub-sing}, in the presence of the poset relation, there are multiple ways of building the topology of $M$ by crossing different arcs of the critical value set while respecting the poset relation.
That leads us the Definition~\ref{def:per-path} of persistence paths. It is a new concept which is somewhat analogous to the mentioned foliation method of \cite{CDFFC}.
It also can be viewed as an analogy of a flow induced by the generalized gradient in \cite{Wan}
with this that our persistence paths apply to functions $f$ which may have cycles in the sense of \cite[Definition 6.4 and 6.5]{Wan}.
We prove an analogy of the Morse-Conley equation \cite{RybZeh85} (Theorem~\ref{th:morese-conley-eq}), and derive from it Corollary~\ref{cor:strong-morse} on strong Morse inequalities for persistence paths.
This gives us a flexible family of Morse inequalities associated to $f$, extending the work of Wan \cite{Wan}.
We conclude Section \ref{sec:per-paths} by introducing path-wise barcodes in Definition~\ref{defn:path-barcode} and describing a scheme for computing the barcodes based on a small representable subfamily $\rep(f)$ of all persistence paths.
While Carlsson and Zomorodian outline an argument that there is no complete and discrete invariant of multi-graded
persistent homology \cite{CZ09}, the primary result of this paper strikes a more optimistic note in the case of multi-filtrations
induced by smooth functions.

\medskip

In Section \ref{sec:ext} we discuss some possible future research directions.

\medskip

The authors would like to thank Hyam Rubinstein, Marian Mrozek, and Michael Lesnick for helpful suggestions.

\medskip


\section{Fold and cubical singularities}
\label{sec:fold-cub-sing}

In the classical Morse theory of smooth real-valued functions and, respectively, singularity theory of functions with values either in a $2$-dimensional manifold $\Sigma$, a critical point or singular point is a point $p\in M$ at which $Df(p)$ is not of maximal rank. The corresponding point $c=f(p)$ in the target space is called a critical, respectively, singular value of $f$. The terminology found in the literature is not consistent: sometimes the terms critical and singular are interchanged.

\medskip

In computational topology, we deal with non-smoothness and degeneracy, so a topological definition is more appropriate. It is also helpful in describing handle attachments. In addition, in the presence of the poset structure of bi-filtrations, as we shall see soon, there is a substantial difference between singularity in the differential sense and criticality in the topological sense. We shall adopt the following definition.

\begin{definition} \label{def:homot-reg-crit-value}
A {\em homotopy regular value} of $f$ {\em with respect to the bi-filtration} of M is a point
$(a,b)\in \Real^2$ such that in some neighbourhood $U_{(a,b)}$ of $(a,b)$,
for all $(a',b'), (a'',b'')\in U_{(a,b)}$ with
\[
(a',b')\preceq (a'',b''),
\]
the inclusion $M_{(a',b')} \hookrightarrow M_{(a'',b'')}$
is a homotopy equivalence. If this condition fails, $(a,b)$ is called a {\em homotopy critical value}.
\end{definition}

A weakening of this definition suited to persistent homology is the notion of {\em homological regular} and {\em critical values} defined by replacing homotopy equivalence by isomorphism induced in homology. This coincides with the definition of  given in \cite[Definition 3.4]{CEFKL}.

\medskip

When $f:M\to \Real$ is a Morse function, the sets of critical points and values in the differential and topological sense coincide. For $\Real^2$-valued functions, even the generic ones, they are substantially different. We shall adopt the terms of {\em singular} points and values for those given by differential definition and {\em critical} to those given by Definition~\ref{def:homot-reg-crit-value}. Given $f:M\to \Real^2$. we consider the following sets:
\[
\singp = \{ p\in M \mid \rank Df(p)< 2\},~~ \singv = f(\singp),
\]
\[
\critv = \{ (a,b) \in \Real^2 \mid \mbox{$(a,b)$ is homotopy critical} \},~~ \critp = f^{-1}(\critp).
\]
We shall soon see that the arcs of $\singv$ along which both coordinates $(a,b)$ increase are homotopy regular, so they are not subsets of $\critv$. The topologically significant arcs are those whose normal vectors have both coordinates of the same sign. Conversely, $\critv$ contains horizontal or vertical half-lines passing through the vertex of $C_{(a,b)}$ and `kissing' points on the singularity $\singv$ but not contained in it. In Proposition~\ref{prop:char-crit-values} we give a classification of different types of criticality.

As we just noticed, Definition~\ref{def:homot-reg-crit-value} also applies to points $(a,b)\in \Real^2$ which are not necessarily the values of $f$, that is, are not in the image $f(M)$. For that reason we will refer to them as to points rather than values and whether we speak about points in $M$ or in $\Real^2$ should be made clear from the context.

We begin with a definition from Gay and Kirby \cite{GK-trisecting, GK-indefinite}, and earlier Wan \cite{Wan}.

\medskip
%
\begin{definition}\label{2morsdef}
A {\it $2$-Morse function} (also called Morse $2$-function)  is a smooth function
$$ f : M \to \Sigma$$
where $M$ is an $m$-manifold, $m \geq 2$ and $\Sigma$ is a $2$-manifold satisfying a local
condition.
For any point $p \in M$ there are neighbourhoods
$U_p \subset M$ of $p$ in $M$ and $V_{f(p)} \subset \Sigma$ of $f(p)$ in $\Sigma$ together
with diffeomorphisms $\phi : U_p \to U'_0 \subset \Real^m$ and $\psi : V_{f(p)} \to V'_{0}$.
with $\phi(p)=0, \psi(f(p))=0$ making the diagram below commute
$$\xymatrix{ U_p \ar[r]^{f_{|U_p}} \ar[d]^{\phi} & V_{f(p)} \ar[d]^{\psi} \\
 U'_0 \ar[r] & V'_{0}}$$
where the bottom horizontal arrow must be one of the three:
\begin{itemize}
\item $(x_1,x_2,\cdots,x_m) \longmapsto (x_1,x_2)$ for this, $p$ is a {\bf regular point}.
\item $(x_1,x_2,\cdots, x_m) \longmapsto (x_1, \pm x_2^2 + \cdots + \pm x_m^2)$
for this, $p$ is a {\bf fold point}.
\item $(x_1,x_2,\cdots,x_m) \longmapsto (x_1, x_2^3 + x_1x_2 + \pm x_3^2 + \cdots + \pm x_m^2)$
for this $p$ is a {\bf cubic point}.
\end{itemize}
\end{definition}

Just like with Morse functions there are elementary transversality conditions equivalent to Definition
\ref{2morsdef} \cite{Wan} \S 1.  This allows the conclusion that for any smooth function
$f : M \to \Sigma$ where $\Sigma$ is a $2$-manifold, via a small perturbation of $f$ we
may convert $f$ into a $2$-Morse function, i.e. $2$-Morse functions form an open and dense subset of
the space of smooth functions $M \to \Sigma$.

The curves of the fold singularities come equipped with transverse-oriented
indices.  This is analogous to the index of a critical point of a Morse function, but made slightly more complex by the
codomain of our function being $\mathbb R^2$.

The index has the form of a triple $(v,i,j)$ where $v$ is a vector transverse to the singular value set, and $i$ is the
dimension of the eigenspace that is folded into the $v$ direction, while $j$ is the dimension of the eigenspace that is
folded into the $-v$ direction.  Thus $i+j=m-1$.   Due to this convention we need the equivalence relation
$(v,i,j) \sim (-v, j, i)$.   Further notice that due to the nature of the cubic
singularity there are two fold-type singularities that merge, with one fold being of type $(v,i,j)$ and
the other fold being of the type $(v,i+1,j-1)$. In our diagrams we will typically draw the $v$ vectors, and only
plot the pair $(i,j)$. In general $i$ is an integer in the set $\{0,1,2,\cdots,m-1\}$, see Figure \ref{cvssym}.
Our oriented index makes sense only on the fold points.
We give a more precise definition in the next paragraph.

\begin{figure}
{
\psfrag{im}[tl][tl][0.8][0]{$(i,j)$}
\psfrag{mi}[tl][tl][0.8][0]{$(j,i)$}
\centerline{\includegraphics[width=8cm]{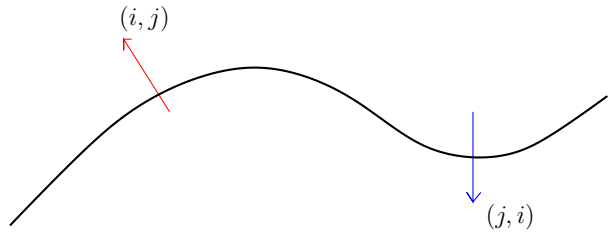}}
}
\vskip 1mm
\caption{Depiction of the symmetry of the index of fold points.}\label{cvssym}
\end{figure}

\begin{definition}\label{idxdef}
Given a Morse $2$-function $f : M \to \Real^2$, and a point $p \in M$ in the fold singular points, then
$$Hf_p : T_p M \otimes T_p M \to \Real^2$$
is a bilinear function taking values in a $1$-dimensional subspace of $\Real^2$, complimentary to the image
of $Df_p$.  Choosing $v \in \Real^2$ spanning this subspace, we can treat $Hf_p$ as real-valued bilinear function, i.e.
by considering $Hf_p \cdot v : T_p M \otimes T_p M \to \Real$.  As this is a symmetric bilinear function, Sylvester's
law of inertia gives us a well-defined signature invariant, $(i,j)$, that can be thought of as the dimensions of the
maximal subspaces where the form is positive or negative-definite respectively.
\end{definition}

Notice that at a cubic singular point the Hessian is degenerate, i.e. $i+j=m-2 < m-1$, with the null space
together with the image of $Df_p$ spanning the cusp's plane of curvature.

Before we begin the examples, it is important to be aware that a Morse function $f : M \to \Real$ gives rise
to a cell decomposition of $M$ \cite{Mil}. These cell decompositions are computable in terms of flow lines of vector
fields conditioned by the derivative of $f$.  The cellular descriptions of $M$ in their most natural state are
homotopy-theoretic in nature, i.e. these techniques give homotopy-equivalences between $M$ and CW-complexes, not
homeomorphisms.  That said, CW-complexes are far from ideal tools to describe manifolds.  The adaptation of CW-complexes
to smooth manifolds are called {\it handle decompositions}, developed by Smale in his proof of the
h-cobordism theorem.  A $k$-cell for an $m$-manifold $M$ is a map $D^k \to M$ that satisfies various properties,
such as being an embedding on the interior. A $k$-handle for an $m$-manifold is a smooth embedding $D^k \times D^{m-k} \to M$,
i.e. handles are not only fully-embedded, but they contain the data of both the cell and a tubular neighbourhood of the
cell.  This allows handle decompositions to not just describe the homotopy-type of $M$, but also its smooth structure.
A sublety of handle attachments is that a $k$-handle is attached only on part of its boundary, i.e.
$(\partial D^k) \times D^{m-k}$, thus there is a risk that we are entering the class of manifolds with cubical corners.
The exposition of Kosinski \cite{Kos} gives careful consideration to this problem, keeping the constructions purely in the
language of manifolds with boundary.  A Morse function $f : M \to \Real$ gives a handle decomposition of $M$, moreover this
handle decomposition describes the smooth structure on $M$.

Starting from illustrative examples, we investigate the relation between bi-filtration and the classical singularity theory.

\begin{figure}
{
\psfrag{r1}[tl][tl][0.8][0]{$\textcolor{green}{r_1}$}
\psfrag{rk}[tl][tl][0.8][0]{$\textcolor{green}{r_k}$}
\psfrag{I1}[tl][tl][0.8][0]{$(m-I_1, I_1)$}
\psfrag{I2}[tl][tl][0.8][0]{$(m-I_k, I_k)$}
\psfrag{i1}[tl][tl][0.8][0]{$H^{I_1}$}
\psfrag{i1m}[tl][tl][0.8][0]{$H^{I_1+1}$}
\psfrag{mi1}[tl][tl][0.8][0]{$H^{m-I_1}$}
\psfrag{i2}[tl][tl][0.8][0]{$H^{I_k}$}
\psfrag{i2m}[tl][tl][0.8][0]{$H^{I_k+1}$}
\psfrag{mi2}[tl][tl][0.8][0]{$H^{m-I_k}$}
$$\includegraphics[width=14cm]{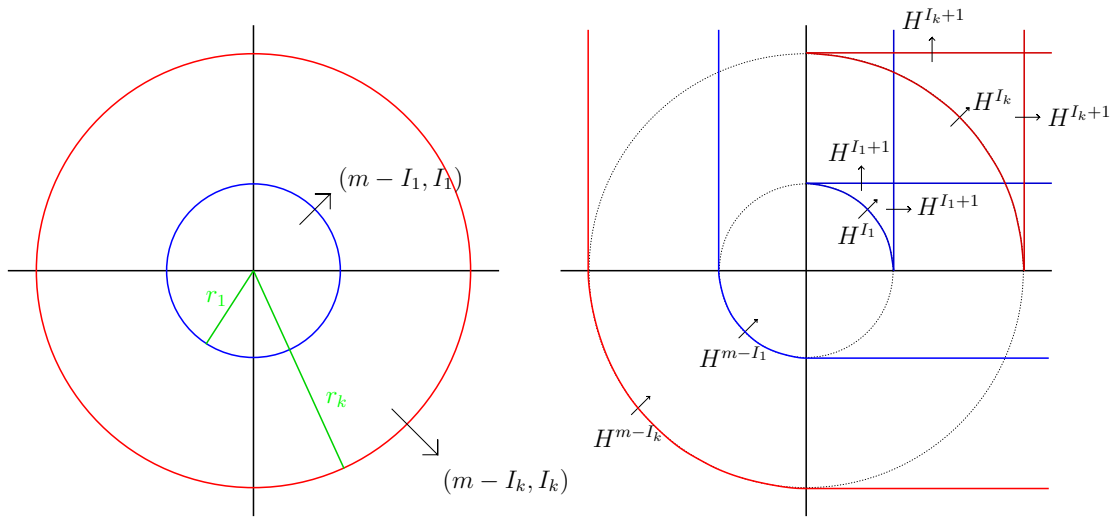}$$
}
\caption{Singular values for $f$ in Example \ref{1-morse} (left) and persistence diagram (right)}\label{rot2mors}
\end{figure}
\vskip 1cm

\begin{example}\label{1-morse}
If $g : M \to \Real$ is a Morse function, then

$$f : S^1 \times M \to \Real^2 $$

given by $f(z,p) = z \cdot g(p)$ is a $2$-Morse function with only fold singularity types.

If the singular values of $g$ consist of positive real numbers $0 < c_1 < c_2 < \cdots < c_k$
then the singular values of $f$ consist of the circles of radius $c_1, c_2, \cdots, c_k$ centred
at the origin.

If the singular value $c_i$ (of $g$) has index $I_i$, then the circle of
$f$ at radius $c_i$ is also a fold-type singular value set of index $(\hat r, m-I_i, I_i)$
where $\hat r$ is the unit outward-pointing radial vector.

The persistence diagram for the preimages $M_{(a,b)}$
is a union of the {\em descending part} of the singular values of $f$ together with some
vertical and horizontal lines at the end-points.

In Figure \ref{rot2mors}, the diagram on the left depicts the singular values of the function $f$.
These are the circles of rotation of the singular
values of $g$.  Say the red circle corresponds to a singular point of index $I_i$.  An alternative way of saying this
is that the homotopy-type of the space $g^{-1}\left( (-\infty,t]\right)$ as $t$ transitions through the point $c_i$ changes by a $I_i$-cell attachment.

In the figure on the right, we describe how the preimages $M_{(a,b)}$ change as the points $(a,b) \in \Real^2$ vary.

Let us take the blue circle for example, on the left.  This is the singular value $c_1$ of index $I_1$.
On the right, this singular circle gives us two singular arcs.  The lower blue arc is properly embedded
in $\Real^2$, and transitioning through it corresponds to a $m-I_1$ handle attachment.  This should be thought
of as a dual handle to $c_1$ (of $g$).  The other singular value is a `fishtail', divided into three properly
embedded arcs.  The round arc corresponds to a handle attachment of index $I_1$, while the two straight lines
correspond to handle attachments of index $I_1+1$ respectively.   The handles of index $I_1+1$ should be thought
of as cancelling handles to the index $I_1$ handle.  Thus attachment of all three handles of index $I_1, I_1+1, I_1+1$
respectively has the same effect on the homotopy-type as a single attachment of a handle of index $I_1+1$.

\end{example}

\begin{example}\label{spherepocket}
Given a round sphere $S^2 \subset \Real^3$, the orthogonal projection map $\pi : \Real^3 \to \Real^2$
when restricted to $S^2$ has singular values the unit circle, corresponding to an equatorial circle in $S^2$ of
singular points.  Imagine the sphere being made of rubber.  We grab a small section of the sphere (away from the
equator) and fold it over itself, creating a cupped sphere.  This introduces an eye singularity in the projection map,
as depicted below.

\begin{figure}
\psfrag{c10}[tl][tl][0.5][0]{$(1,0)$}
\psfrag{c01}[tl][tl][0.5][0]{$(0,1)$}
\psfrag{h0}[tl][tl][0.5][0]{$H^0$}
\psfrag{h1}[tl][tl][0.5][0]{$H^1$}
\psfrag{h2}[tl][tl][0.5][0]{$H^2$}
\psfrag{p0}[tl][tl][0.5][0]{$0$}
\psfrag{p1}[tl][tl][0.5][0]{$1$}
\psfrag{p2}[tl][tl][0.5][0]{$2$}
\psfrag{p1t}[tl][tl][0.5][0]{$1+t$}
\psfrag{p1t2}[tl][tl][0.5][0]{$1+t^2$}
$$\includegraphics[width=12cm]{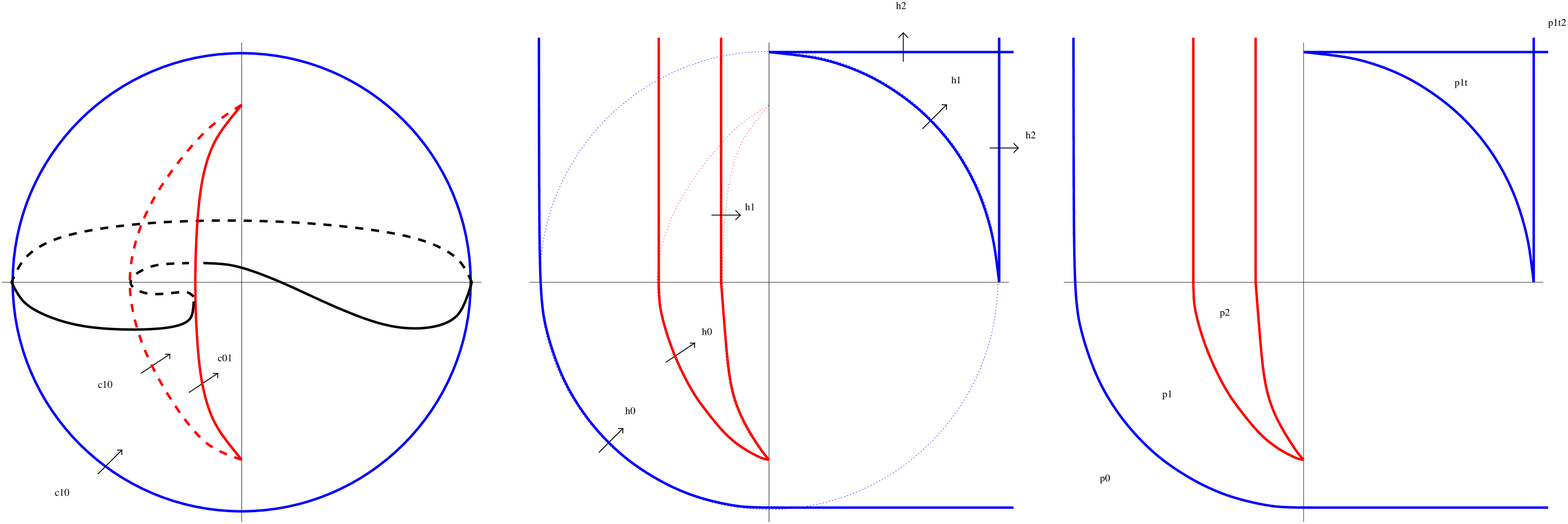}$$
\caption{Cupped sphere projection}\label{cup2sph}
\end{figure}

The pure fold singularity, the equator, is in blue. The red singularity
is an 'eye' type singularity, with precisely two cubic (cusp) points. This is
depicted on the left.  In the central figure we describe the handle attachments of
the bi-filtration.  In the figure on the right we describe the Poincar\'e polynomials
of the bi-filtration, i.e. the bi-filtration is regular at the white points, with transitions
only at the red and blue points.
\end{example}

We give a fairly general example with cubic singularities.

\begin{example}\label{parm-morse}
Cerf theory tells us that a $1$-parameter family of real-valued functions on a manifold is not (generically) Morse at all
parameter times.  There
will be finitely many times where the Morse singularities devolve into cubic singularities.  Thus take a generic $1$-parameter
family of functions on $M$, $F : S^1 \to C(M,\Real)$, and form the function
$$f : S^1 \times M \to \Real^2$$
given by $f(v,p) = F(v)(p)\cdot v$.  The function $f$ is $2$-Morse.  The bi-filtration
$M_{(a,b)}$ will be described in Theorem \ref{mainthm}.
\end{example}

Notice Example \ref{parm-morse} is a direct generalization of Example \ref{1-morse}, i.e.
Example \ref{1-morse} can be derived by setting $F$ to be the constant function.

\begin{example}\label{kleinbot}
A rather colourful example comes from orthogonal projection $\Real^4 \to \Real^2$ pre-composed with
one of the standard embeddings of the Klein bottle $K^2 \to \Real^4$.  This example appears in \cite{Wan}.
The singularity theory for mappings of $2$-manifolds into the plane, of which this is a good demonstration,
 was originally discovered by Whitney \cite{Whit}.
\begin{figure}
\psfrag{i0}[tl][tl][0.7][0]{$(0,1)$}
\psfrag{i1}[tl][tl][0.7][0]{$(1,0)$}
\psfrag{h0}[tl][tl][0.6][0]{$H^0$}
\psfrag{h1}[tl][tl][0.6][0]{$H^1$}
\psfrag{h2}[tl][tl][0.6][0]{$H^2$}
\psfrag{p0}[tl][tl][0.5][0]{$0$}
\psfrag{p1}[tl][tl][0.5][0]{$1$}
\psfrag{p1t}[tl][tl][0.5][0]{$1+t$}
\psfrag{p2t}[tl][tl][0.5][0]{$2+t$}
\psfrag{p12t}[tl][tl][0.5][0]{$1+2t$}
\psfrag{p13t}[tl][tl][0.5][0]{$1+3t$}
\psfrag{p14t}[tl][tl][0.5][0]{$1+4t$}
$$\includegraphics[width=12cm]{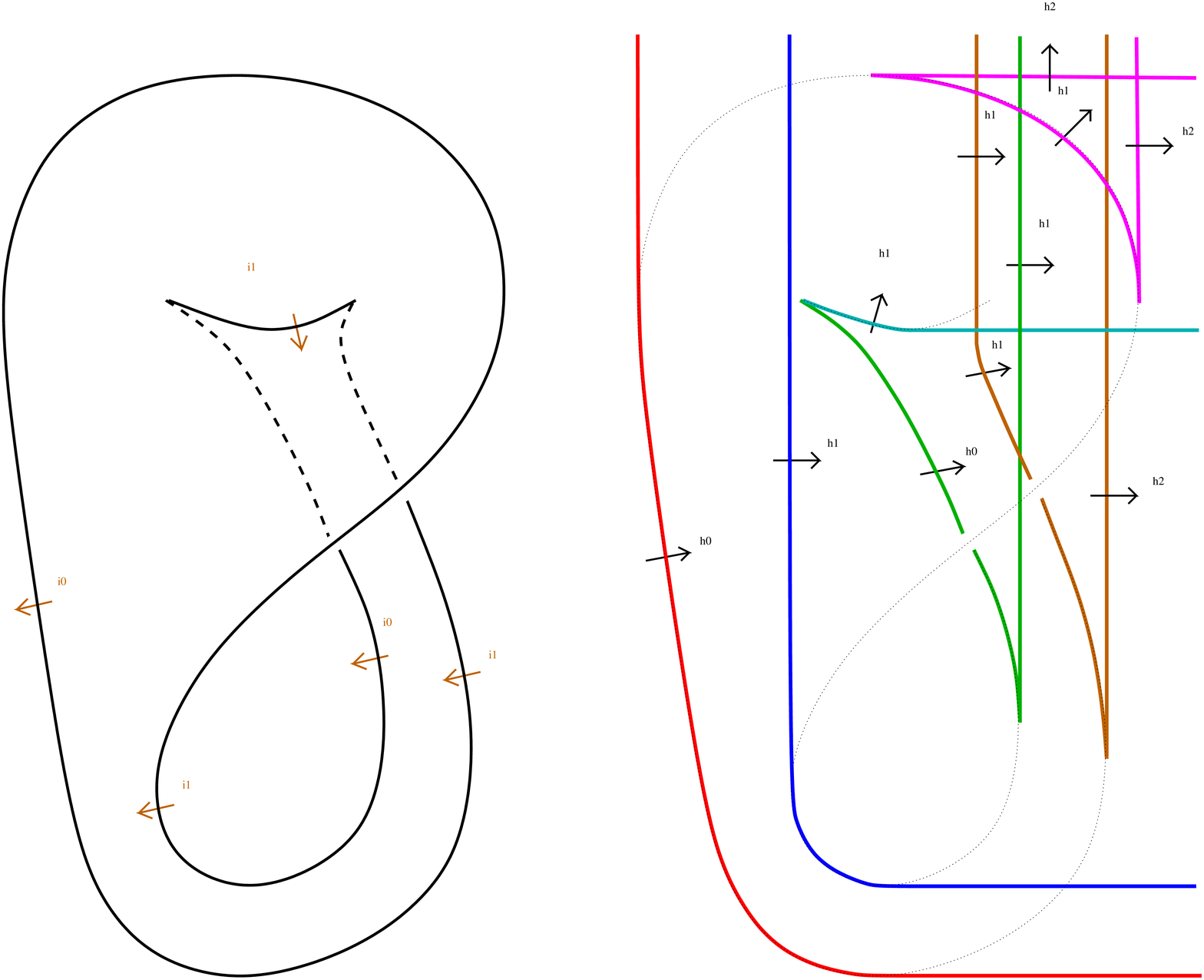}$$
\caption{Klein bottle projection}\label{kleinproj}
\end{figure}
\end{example}

\medskip

We have seen in the previous examples that the singular points of the filtration consist of a subset of the
singular points of the mapping $f$, together with some regular points of the original mapping -- these
were a collection of vertical and horizontal rays.  We divide singular points of the filtration into two
classes {\em corner singular points}, and {\em tail singular points}.

\begin{itemize}
\item We say a point $(a,b) \in \Real^2$ is a corner singular point, if for all suitably-small neighbourhoods
$U$ of $(a,b)$ in $\Real^2$ there are points $(a',b') \in U$ such that $U \cap C_{(a',b')}$ intersects the singular set of
$f$ in both the horizontal and vertical boundary edges of $C_{(a',b')}$. If we write the coordinates of
$\Real^2$ as $(x,y)$ then this happens when locally writing the singular values of $f$ as the graph of a function
$y(x)$, then the function $y(x)$
would be decreasing at $x=a$. A corner singular point is demonstrated in Figure \ref{int1} (G).
\item  For $(a,b)$ to be a tail singular point, we require that $C_{(a,b)}$ intersects the singular values of $f$ tangentially,
on either the interior of the horizontal or vertical boundary curve. In a neighbourhood of the tangential intersection we require
the singular set to be on one side of the cube, i.e. either contained in the cube or in the closure of its exterior.   Thus
it is a 'kissing' tangency. The two tail singular point types are demonstrated in Figure \ref{int1} (B).
\end{itemize}
\begin{figure}
\psfrag{p}[tl][tl][0.5][0]{$(a,b)$}
\psfrag{A}[tl][tl][0.5][0]{$(R)$}
\psfrag{B}[tl][tl][0.5][0]{$(B)$}
\psfrag{C}[tl][tl][0.5][0]{$(G)$}
$$\includegraphics[width=14cm]{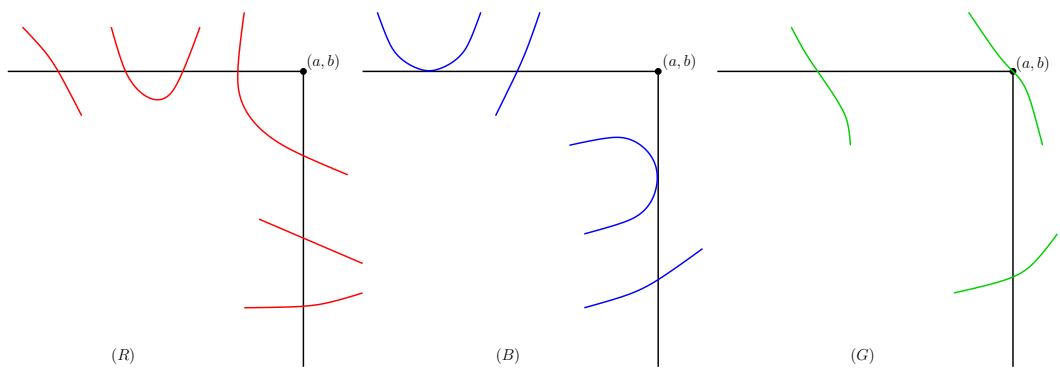}$$
\caption{Three intersection types with dimension $1$ stratum of the singular values.
Type \textcolor{red}{(R)} consists of transverse intersections with the $0$ and $1$-dimensional strata
of the singular value set for $f$.  Type \textcolor{blue}{(B)} consists of both transverse intersections
and simple 'kissing' non-transverse intersections with the $1$-strata of the singular value set.  Type
\textcolor{green}{(G)} consists of transverse intersections together with a corner-type intersection.
Thus (R) is generic, i.e. codimension $0$ in the filtration, while types (B) and (G) are of higher codimension.
}\label{int1}
\end{figure}

Notice that Figure \ref{int1} (R) describes a regular point $(a,b)$ of the filtration $M_f$. We should note that while it is
true a point can be both corner singular and tail singular at the same time, this is a codimension two condition,
thus it is relatively rare.  On the other hand, tail and corner singular points are codimension one conditions.

The proof of Theorem \ref{mainthm} has several special cases, but there is one elemental argument that is common
to all cases.  We put this in the next lemma.

\begin{lemma}\label{mainlem}
If $f : M \to \Real^2$ is a $2$-Morse function, provided the point $(a,b) \in \Real^2$ is regular for $f$,
and the boundary of $C_{(a,b)}$ intersects the singular values of $f$ transversely then $(a,b)$ is not only
a regular point for the filtration $M_f$, but the filtration is {\bf locally trivial} near $(a,b)$.
\end{lemma}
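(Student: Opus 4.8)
The plan is to build an isotopy argument in two stages. First, I would observe that under the transversality hypothesis, the boundary $\partial C_{(a,b)}$ — which is a piecewise-smooth curve with one corner at $(a,b)$ — can be smoothed near $(a,b)$ to obtain a genuinely smooth properly embedded arc $\gamma$ in $\Real^2$ that agrees with $\partial C_{(a,b)}$ outside a small neighbourhood of the corner, and which still meets $\singv$ transversely (this is possible precisely because $(a,b)$ itself is a regular point of $f$, so $\singv$ does not pass through the corner, and the two edges of $\partial C_{(a,b)}$ each meet $\singv$ transversely in their interiors). The smoothing can be done so that the region it cuts off, call it $\widetilde C$, is diffeomorphic to $C_{(a,b)}$ by a diffeomorphism of $\Real^2$ supported near the corner; since this diffeomorphism misses $\singv$ entirely, $f^{-1}(\widetilde C)$ and $f^{-1}(C_{(a,b)}) = M_{(a,b)}$ are diffeomorphic. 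So it suffices to prove local triviality for the smoothed family.

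Second, I would set up the standard Thom–Mather / Ehresmann-style flow argument, but carried out simultaneously downstairs in $\Real^2$ and upstairs in $M$. Downstairs: for $(a',b')$ in a small neighbourhood $U$ of $(a,b)$, choose a smooth family of vector fields on $\Real^2$ whose time-one flows carry $\widetilde C_{(a,b)}$ to the corresponding smoothed quadrant $\widetilde C_{(a',b')}$, arranged so that each such vector field is tangent to $\singv$ along $\singv$ (possible because the boundary curves cross $\singv$ transversely and miss the cubic points, so near each crossing point $\singv$ is a smooth arc and we can interpolate between a vector field tangent to it and the "pushing the boundary" vector field). Upstairs: lift this family of vector fields through $Df$ to a family of vector fields on $M$. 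On the regular locus of $f$ the lift is routine; near the fold curves one uses the local normal form $(x_1,\pm x_2^2 \pm \cdots)$ from Definition \ref{2morsdef} to lift explicitly, the key point being that since the downstairs field is tangent to $\singv$ there, the lifted field is tangent to $\singp$ and the flow preserves $\singp$; cutoff functions patch these local lifts into a global vector field on $M$. Integrating the upstairs flow gives, for each $(a',b') \in U$, a diffeomorphism $M_{(a,b)} \xrightarrow{\ \cong\ } M_{(a',b')}$, and by taking the flows to depend smoothly on $(a',b')$ one gets the trivialization $f^{-1}(\partial^- C_U) \times U$, i.e. local triviality of the filtration near $(a,b)$; in particular $(a,b)$ is a regular point. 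This is exactly the "vector field flow technique producing conjugate isotopies in both $M$ and $\Real^2$" that the introduction advertises.

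The main obstacle — and the step deserving real care — is the lifting of the vector fields across the fold locus while maintaining tangency to $\singp$ and smoothness of the global field. On the regular set $Df$ is surjective and a smooth (even linear-algebraic) section of $Df$ exists locally; the trouble is that these local sections blow up or fail to match as one approaches a fold curve, so one cannot naively partition-of-unity them together. The resolution is to work in the fold normal form: in coordinates where $f$ is $(x_1, x_2^2 + \varepsilon_3 x_3^2 + \cdots)$, a downstairs field $\partial_u + c(u,v)\partial_v$ that is tangent to the singular value parabola-image lifts to $\partial_{x_1} + \tfrac{1}{2 x_2}(\text{stuff})\partial_{x_2} + \cdots$, and one checks the apparent $1/x_2$ singularity is cancelled by the tangency condition (the numerator vanishes to first order along $x_2 = 0$), so the lift extends smoothly across the fold and is tangent to $\{x_2 = 0\} = \singp$ there. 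Once each fold curve is handled in its own normal-form chart and the regular-set lift is handled separately, a partition of unity subordinate to the cover $\{$regular charts$\} \cup \{$fold charts$\}$ assembles a global smooth lift; completeness of the resulting flow for small time follows from compactness of $M$. The remaining verifications — smooth dependence on $(a',b')$, and that the smoothing of the corner can be done $U$-parametrically — are routine but should be stated.
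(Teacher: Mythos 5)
Your proposal follows essentially the same route as the paper's proof: construct a vector field on $\Real^2$ tangent to $\singv(f)$ whose flow carries $C_{(a,b)}$ to nearby quadrants, lift it through $Df$ to $M$ (using that $Df$ surjects onto the tangent spaces of $\singv(f)$ at fold points and has rank two elsewhere), and integrate to obtain conjugate fiber-preserving isotopies. The paper handles the corner by performing two axis-aligned moves (first in $a$, then in $b$) with the field set to zero on the non-moving edge rather than smoothing the corner, and it merely asserts liftability where you verify it via the fold normal form, but these are differences of detail, not of method.
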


\begin{proof}
We can prove something considerably stronger when $(a,b) \in \Real^2$ is regular -- the filtration is locally
trivial near $(a,b)$.  Precisely, there is a neighbourhood $U$ of $(a,b) \in \Real^2$ such that for any
$(a',b') \in U$ there is a diffeomorphism $\tilde \phi : M \to M$ covering a diffeomorphism $\phi : \Real^2 \to \Real^2$
such that $\tilde \phi (f^{-1}(C_{(a,b)})) = f^{-1}(C_{(a',b')})$ and $\phi(C_{(a,b)}) = C_{(a',b')}$.  When we say
$\tilde \phi$ 'covers' $\phi$ we mean the following diagram commutes.

\begin{figure}
\psfrag{p}[tl][tl][0.5][0]{$(a,b)$}
\psfrag{A}[tl][tl][0.5][0]{$(R)$}
\psfrag{B}[tl][tl][0.5][0]{$(B)$}
\psfrag{C}[tl][tl][0.5][0]{$(G)$}
$$\includegraphics[width=14cm]{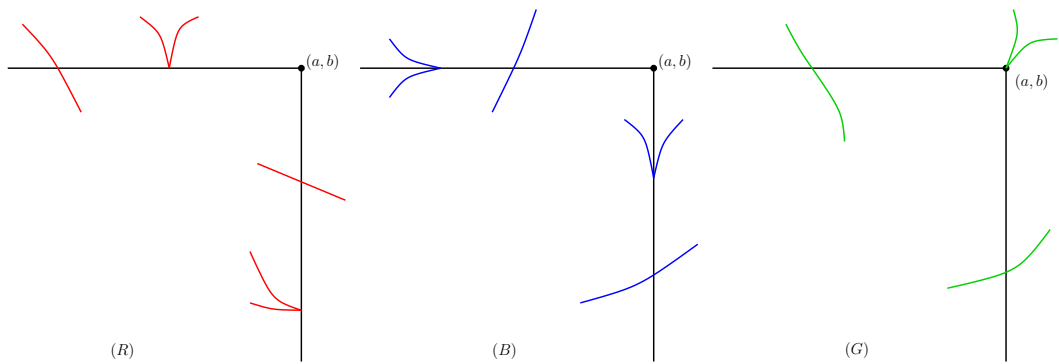}$$
\caption{Three intersection types with dimension $0$ stratum of the singular values.
Type \textcolor{red}{(R)} consists of generic outward and inward intersections with the
$0$-dimensional stratum of the singular value set for $f$.  Type \textcolor{blue}{(B)} consists non-generic
intersections with the $0$-dimensional stratum.  Type
\textcolor{green}{(G)} consists of a generic corner-type intersection with the $0$-dimensional stratum.
In the filtration parameters, \textcolor{green}{(G)} is of codimension two.  Type \textcolor{blue}{(B)} is
of codimension one if it exists, but for a generic $2$-Morse function these singularity types are avoidable,
i.e. one can convert them into type \textcolor{red}{(R)} after a small isotopy applied to $f$.
}\label{int2}
\end{figure}

$$\xymatrix{ M \ar[r]^{\tilde \phi} \ar[d]^-f & M \ar[d]^-f \\
  \Real^2 \ar[r]^\phi & \Real^2 }$$

The map $\tilde \phi$ is sometimes called a fibre-preserving diffeomorphism of $f$, or a {\it horizontal} diffeomorphism.
A consequence of our proof will be that $\tilde \phi$ and $\phi$ are close to the identity diffeomorphism,
where 'close' is controlled by the size of the neighbourhood $U$.  That such a neighbourhood exists can be deduced
from the Transversality Stability Theorem \cite{GP}.

The idea of the proof is to find a vector field in the plane whose flow maps $C_{(a,b)}$ to $C_{(a',b')}$ provided
$(a',b')$ is near-enough to $(a,b)$.  We construct the vector field in a manner that allows us to lift it to
a vector field on $M$, thus the flow of this vector field will send $f^{-1}(C_{(a,b)})$ to $f^{-1}(C_{(a',b')})$.
Given that the derivative of $f$ is not an epi-morphism at singular points of $f$, we have to take some care defining
the vector field.  At fold points of $f$ the derivative of $f$ is only onto the tangent space of the singular-value
set.  Thus our neighbourhood $U$ will be constrained by the sole demand that the singular value set needs to be
transverse to $\partial C_{(a',b')}$ for all $(a',b') \in U$.  An example illustration of a valid $U$ is depicted
in the figure below.  The green region illustrated is the set of points $UC = \{ p \in \partial C_{(a',b')} : (a',b') \in U \}$.

For the sake of argument, let's assume $b'=b$, i.e. we break the proof into two steps, i.e. step 1 with $b'=b$ and
step 2 with $a'=a$.  We further assume $a'>a$ as the $a'<a$ case is analogous. Let $\singv(f)$ denote the singular
values of $f$, i.e. $\singv(f) \subset \Real^2$. Consider the curves of $\singv(f) \cap UC$.  On the path-components
of $\singv(f) \cap UC$ that live in the vertical portion of $UC$, we define the vector field to be the unique vector
field that is tangent to $\singv(f)$, and whose $x$-component unit, in particular positive. On the path components
of $\singv(f) \cap UC$ that are in the horizontal portion of $UC$ we define the vector field to be zero.  In the
horizontal portion of $UC$ we extend the vector field to be zero.  In the vertical portion of $UC$ we interpolate
between the definition on $\singv(f) \cap UC$ and the vector field $(1,0)$, using a tubular neighbourhood of $\singv(f)$
in $UC$. Doing this we can ensure the vector field in the vertical portion of $UC$ always has unit $x$-component.
We extend the vector field to all of $\Real^2$, choosing any extension that keeps the length of the vector field bounded,
i.e. so that its flow is complete.  This gives us a flow on $\Real^2$ that sends $C_{(a,b)}$ to $C_{(a',b)}$.
Our vector field lifts to $M$ since the derivative of $f$ is onto the tangent spaces of $\singv(f)$, and for regular
points, the derivative has rank two. By the existence and uniqueness theorem for solutions to ODEs, the flow of an $f$-lifted
vector field is conjugated (by $f$) to the flow of the original vector field on $\Real^2$.  Thus
the flow on $M$ is fiber-preserving and sends $f^{-1}(C_{(a,b)})$ to $f^{-1}(C_{(a',b)})$.

{
\psfrag{ab}[tl][tl][0.8][0]{$(a,b)$}
\psfrag{apb}[tl][tl][0.8][0]{$(a',b)$}
\psfrag{A}[tl][tl][0.8][0]{$\textcolor{green}{UC}$}
\psfrag{cvf}[tl][tl][0.8][0]{$\textcolor{red}{\singv(f)}$}
$$\includegraphics[width=14cm]{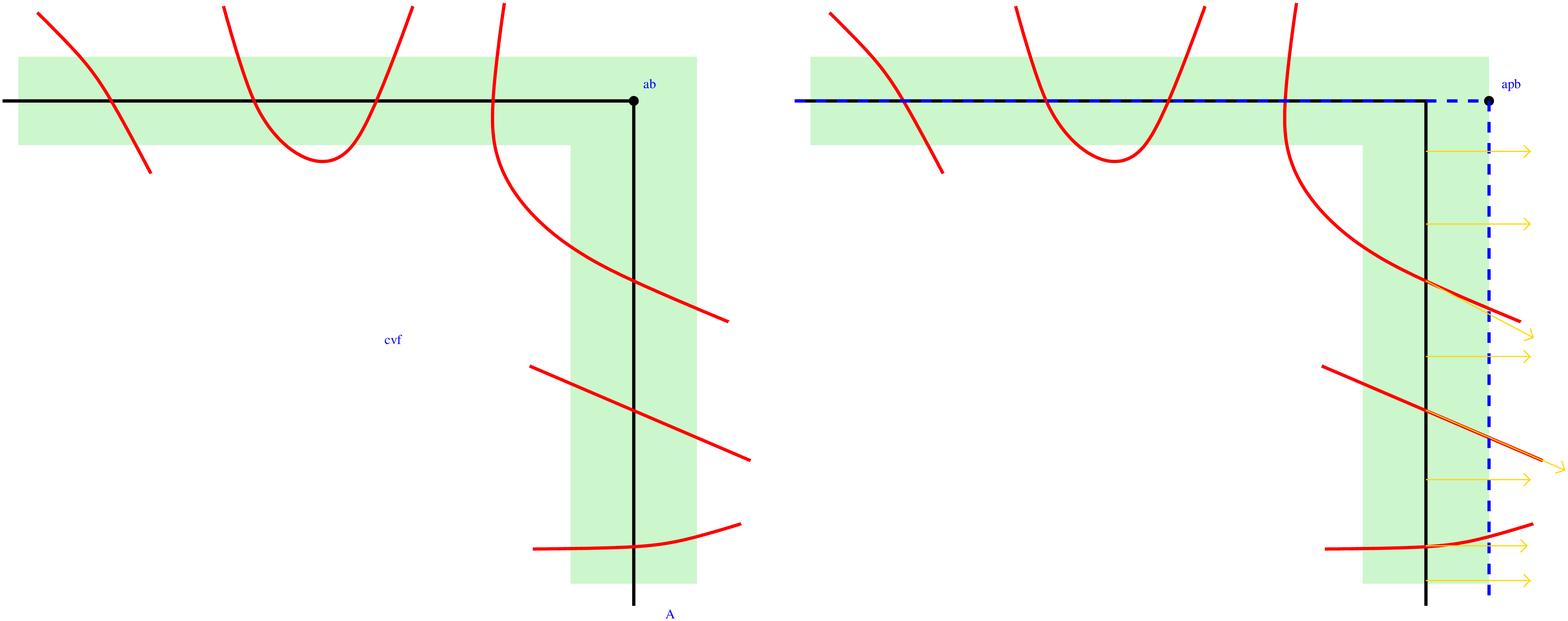}$$
}
\end{proof}

Lemma \ref{mainlem} has several natural generalizations.  For example, let $C$ and $C'$ be compact $2$-dimensional
submanifolds of $\Real^2$.  Then provided there is an isotopy between $C$ and $C'$ such that $\partial C$ is transverse
to the singular value set of $f$ through the entire isotopy (technically one needs to include intersections pairs
of $\singv(f)$ curves as $0$-strata in $\singv(f)$ for this statement to be true),
then $f^{-1}(C)$ and $f^{-1}(C')$ are fiberwise
diffeomorphic.  The condition that the isotopy is transverse to the singular value set through all parameter times
guarantees that the boundary of $C$ does not pass over a cubic point, or ever become tangent to the singular value
set.  These are the events that can trigger changes in the fibrewise homotopy-type of the preimage.

Similarly, provided $(a,b)$ is a regular value of $f$ we can {\it round the corner},
turning $C_{(a,b)}$ into a smooth manifold $C'_{(a,b)}$ such that $f^{-1}(C_{(a,b)})$ and $f^{-1}(C'_{(a,b)})$ are
homotopy-equivalent.


We choose to let $C$ be a compact submanifold of $\Real^2$ for the following arguments, i.e. rather than working with
quadrants $C_{(a,b)}$ we choose to work with compact smooth manifolds, as it exposes the essential features of the argument.

The next theorem states that if the boundary of $C$ (or quadrant $C_{(a,b)}$) passes over a cubic point in the isotopy,
the fiberwise homotopy-type changes but the homotopy-type does not.   Moreover, if the boundary of $C$ passes across the
singular value set (at a tangency or corner, i.e. Figure \ref{int1} (B) and (G)) then the homotopy-type changes via a cell
attachment. We also give enough details that allow the computation of the attaching maps.

\begin{figure}
\psfrag{p}[tl][tl][0.5][0]{$(a,b)$}
\psfrag{A}[tl][tl][0.5][0]{$ $}
$$\includegraphics[width=9cm]{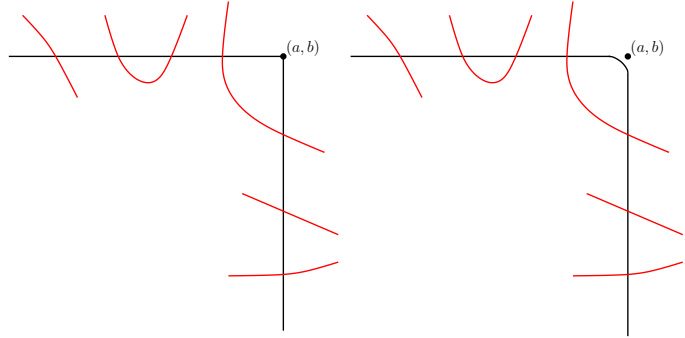}$$
\caption{Rounding $C_{(a,b)}$ to produce $C'_{(a,b)}$.}\label{roundit}
\end{figure}

\medskip

\begin{theorem}\label{mainthm}
If $f : M \to \Real^2$ is a $2$-Morse function then the bi-filtration $M_f$
has singular points consisting entirely of corner and tail singular points.
Further, provided the two height functions $\pi_i : \Real^2 \to \Real$ given by $\pi_1(x,y) = x$ and $\pi_2(x,y)=y$
restrict to Morse function on the fold singular values of $f$, with distinct critical heights
then the transitions to the homotopy-type of $M_{(a,b)}$ when $(a,b)$ is either a corner or tail singular point are given by
individual cell attachments.
\end{theorem}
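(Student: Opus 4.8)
The statement has two halves --- the classification of the homotopy-critical value set $\critv$, and, under the Morse hypothesis on $\pi_1,\pi_2$, the fact that each transition is a single cell attachment --- and I would prove them in that order. For the classification I would run Lemma~\ref{mainlem}, together with its two stated extensions (to a smoothly varying family of compact $2$-submanifolds of $\Real^2$, and ``rounding the corner''), in reverse: a point $(a,b)$ is \emph{not} a homotopy-critical value as soon as, for every nearby $(a',b')\preceq(a'',b'')$, the quadrant $C_{(a',b')}$ can be expanded inside $C_{(a'',b'')}$ through compact $2$-manifolds whose boundary stays transverse to $\singv(f)$ --- where, as in the remark after Lemma~\ref{mainlem}, the cubic points and the transverse self-crossings of $\singv(f)$ are counted among the $0$-strata --- since the lift to $M$ of the generating vector field then deformation-retracts $f^{-1}(C_{(a'',b'')})$ onto $f^{-1}(C_{(a',b')})$, making the inclusion a homotopy equivalence. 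Using the fold/cubic normal forms of Definition~\ref{2morsdef} and the fact that $\partial C_{(a',b')}$ is two edges meeting at a corner, I would enumerate the finitely many configurations at which this combinatorial type can fail to be locally constant: (i) an edge of $\partial C_{(a,b)}$ is tangent to $\singv(f)$ along the interior of that edge --- exactly a tail singular point; (ii) the corner of $C_{(a,b)}$ lies on a fold arc which is locally a graph $y(x)$, the case $y'(a)<0$ being exactly a corner singular point; (iii) the corner or an edge passes over a cubic point of $\singv(f)$; (iv) the corner passes over a transverse self-crossing of $\singv(f)$.

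It then remains to see that (iii), (iv) and the sub-case $y'(a)>0$ of (ii) do not change the homotopy type, so that $\critv$ is contained in the union of the tail and corner singular points. Case (iii) is precisely the remark preceding Theorem~\ref{mainthm}: a cubic crossing merges a cancelling pair of fold arcs --- the fishtail of Example~\ref{1-morse} --- so after rounding the corner the isotopy extension of Lemma~\ref{mainlem} yields a homotopy equivalence even though the fibrewise type jumps. Case (iv) splits a self-crossing into two independent edge-crossings of type~(R) of Figure~\ref{int1}, each governed by Lemma~\ref{mainlem}, and is in any case codimension two. In the sub-case $y'(a)>0$ of (ii) the tangent to the fold arc at $(a,b)$ points into the direction along which the quadrants grow, so the vector field of the proof of Lemma~\ref{mainlem} applies after one lets the corner slide along the arc. (The definition of corner singular point is mildly over-generous at some codimension-two points, e.g.\ a crossing of two ascending arcs, but this only enlarges the right-hand side of the inclusion.)

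For the second half assume $\pi_1,\pi_2$ restrict to Morse functions on the fold singular values with distinct critical heights. This genericity condition makes each tail tangency a non-degenerate quadratic ``kissing'' tangency, forces the singular transitions to occur one at a time, and pushes the coincidences (a point that is simultaneously a corner and tail point, a self-crossing, or a cusp lying at a tail or corner height) onto codimension-two strata; so it suffices to identify, at a single corner or tail point, the transition across the codimension-one stratum through it. At a tail point on, say, the vertical edge, the fold point over the tangency is an isolated critical point of $\pi_1\circ f$, and the hypothesis that $\pi_1|_{\singv(f)}$ is Morse there makes $\pi_1\circ f$ genuinely Morse at that point; straightening $\singv(f)$ exhibits $f^{-1}(C_{(a',b')})$ near it as a sublevel set of a non-degenerate quadratic form, so the transition is a classical Morse handle attachment of index $j$ or $j+1$ (respectively $i$ or $i+1$ on the horizontal edge), the choice determined by the fold index $(i,j)$ and the side of $\singv(f)$ --- this is the handle ``coming directly from a classical Morse theory argument''. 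At a corner point, straightening $\singv(f)$ presents $f^{-1}(C_{(a',b')})$ as a sublevel set of a maximum of two submersions whose only critical point after smoothing is the fold point being absorbed; this is the Bott-style handle attachment, its critical set a contractible segment of the fold arc, hence again a single cell. The exact indices are what Proposition~\ref{prop:char-crit-values} records, so I would defer that bookkeeping. Since a single $k$-handle attachment changes the Euler characteristic by $(-1)^k$, the ``before'' and ``after'' preimages are not homotopy equivalent, which simultaneously shows these points are genuinely homotopy-critical and, with the classification, completes the proof.

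The step I expect to be hardest is the corner case. Unlike for a tail, there is no single smooth function on $M$ whose sublevel sets are the family $f^{-1}(C_{(a',b')})$; one must either smooth the corner while controlling its effect on the fibrewise structure near the absorbed fold arc, or set up Bott's Morse theory for the stratified critical locus $\singp$ and check that the relevant critical piece is exactly one fold-arc segment --- and in either route verify that the corner contributes a single cell rather than one per edge. This is where the distinctness of the critical heights and the reformulation in terms of ``smoothly-varying $2$-manifolds'' from the discussion after Lemma~\ref{mainlem} do the real work.
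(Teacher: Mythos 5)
Your architecture is the same as the paper's --- Lemma~\ref{mainlem} and its isotopy/corner-rounding extensions give local triviality away from a short list of configurations, and the remaining transitions are produced by a mix of classical Morse theory and a Bott-style handle over an interval --- but you distribute the two attachment arguments differently, and both distributions work. The paper replaces the quadrant by a smooth compact $C\subset\Real^2$ from the outset, so every transition becomes a single tangency of $\partial C$ with a fold arc, split into a non-engulfing case (handled by restricting $f$ to the preimage of a short transverse arc $B$ and applying one-dimensional Morse theory to $f^{-1}(B)$, giving an $i$-cell) and an engulfing case (handled by the Bott-style handle $I\times D^i\times D^{m-i-1}$ attached along $\partial(I\times D^i)$, giving an $(i+1)$-cell); the corner singular point is absorbed into the non-engulfing tangency case after rounding. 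You instead keep the tail/corner dichotomy: for tails you verify that $\pi_1\circ f$ (resp.\ $\pi_2\circ f$) is honestly Morse at the fold point over the tangency. This is correct --- in the fold normal form the Hessian of $\pi_1\circ f$ is block-diagonal, with one block the fold form of signature $(i,j)$ and the other the second derivative of $\pi_1$ along the fold arc, which is exactly where your hypothesis enters --- and this single computation uniformly yields both the index-$i$ and index-$(i{+}1)$ sub-cases that the paper treats by two separate mechanisms. For corners you invoke a Bott-style attachment over a segment of the fold arc; this is the harder route (the paper's rounding reduces the corner to one more non-engulfing tangency and avoids Bott theory over a critical set with boundary altogether), but you correctly identify it as the delicate step, and your fallback of smoothing the corner is precisely what the paper does.

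The one place you lean on an assertion rather than an argument is the cubic point, your case~(iii). The isotopy extension of Lemma~\ref{mainlem} cannot deliver the homotopy equivalence there, because its hypothesis --- transversality of $\partial C$ to $\singv(f)$ with cubic points counted among the $0$-strata --- fails exactly at the moment of the cubic crossing. The paper's Case~2 argument is that the crossing attaches a one-parameter family of cancelling $i$- and $(i{+}1)$-handles, i.e.\ a ball glued along a hemisphere, which is why the homotopy type (though not the fibrewise type) is unchanged. You name the cancelling pair, so the idea is present, but the justification must go through that ball-along-a-hemisphere computation rather than through Lemma~\ref{mainlem}.
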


\begin{proof}
Rather than using the restrictive language of quadrants, let $C$ be a compact submanifold of $\Real^2$ and we investigate
the change in homotopy-type of $f^{-1}(C)$ through an isotopy of $C$.  We have two cases to consider.

Case 1 is a regular tangency -- analogous to a type-$2$ Reidemeister move of the planar diagram, in that it creates
two points of intersection between the boundary of $C$ and the singular value set.  Roughly speaking there are two
types of regular tangency moves. This move can be described via a 'bigon modification' where one appends a bigon to
the manifold $C$, attaching along one of the edges. The second edge of the bigon belongs to $\singv(f)$.  In the
'non-engulfing' move, $\singv(f)$ points out of $C$ after the bigon is appended, while in the engulfing version,
$\singv(f)$ points into $C$ as one departs the bigon.

In the non-engulfing version of case 1, the move corresponds to a cell attachment of index $i$ provided the index
of $\singv(f)$ is of the form $(v,i,j)$ where $v$ is in the direction of $\partial C$ as it sweeps over $\singv(f)$.

Let $C'$ denote the submanifold of $\Real^2$ after the isotopy of $C$ has been applied, i.e. as in the
right hand side of the figure above.  Using an argument analogous to Lemma \ref{mainlem} we see that
$f^{-1}(C')$ has the same homotopy-type as $f^{-1}(C) \cup f^{-1}(B)$ where $B$ is the blue arc below.

{
\psfrag{C}[tl][tl][0.8][0]{$C$}
\psfrag{Cp}[tl][tl][0.8][0]{$C'$}
\psfrag{cvf}[tl][tl][0.8][0]{$\textcolor{red}{\singv(f)}$}
$$\includegraphics[width=12cm]{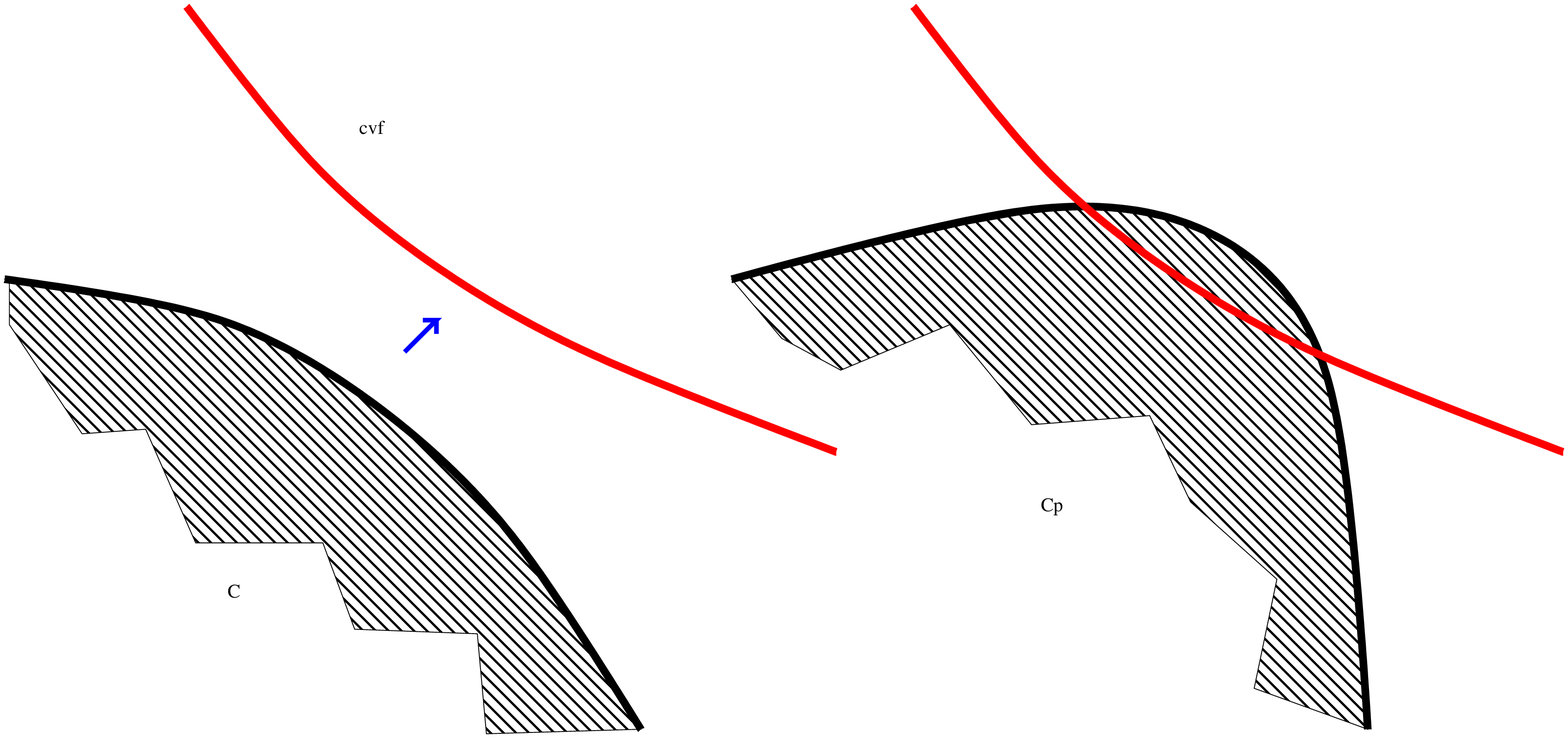}$$
\centerline{Case 1, non-engulfing}
}

The restriction of $f$ to $f^{-1}(B)$, and after identifying $B$ with an interval in $\Real$, is a $1$-Morse function,
thus $f^{-1}(B)$ has the homotopy-type of $f^{-1}(B \cap C)$ attach an $i$-cell, by the Morse Lemma.  More specifically,
this is proven in Theorem 3.2 of \cite{Mil}.

{
\psfrag{C}[tl][tl][0.8][0]{$C$}
\psfrag{B}[tl][tl][0.8][0]{$\textcolor{blue}{B}$}
\psfrag{cvf}[tl][tl][0.8][0]{$\textcolor{red}{\singv(f)}$}
$$\includegraphics[width=5cm]{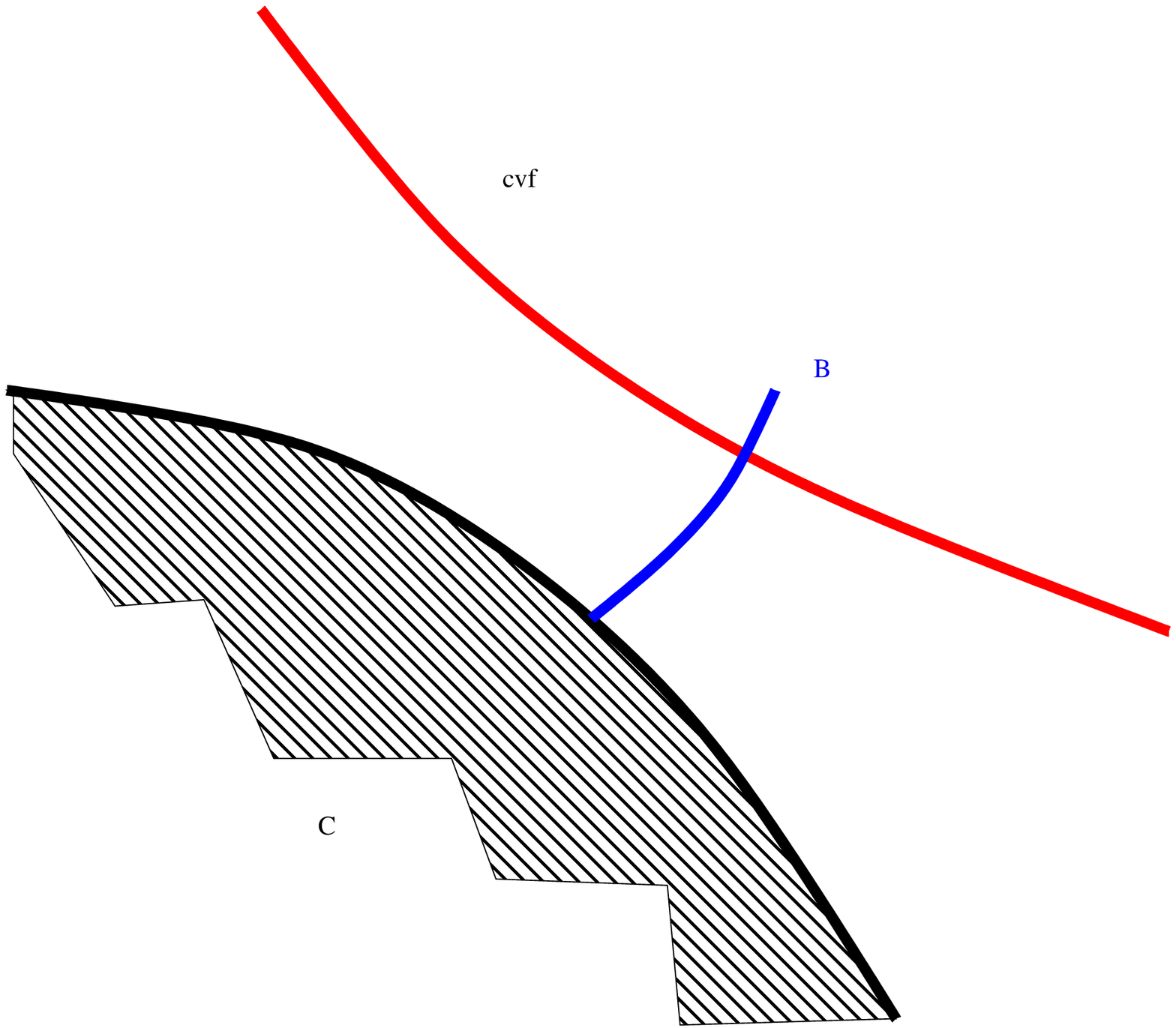}$$
\centerline{Case 1, non-engulfing}
}

For the engulfing version of case 1, the cell attachment is of index $i+1$ provided the index of $\singv(f)$ is $(v,i,j)$,
and the attaching map is analogous to the previous case, but it should be thought of as an unbased version of a Whitehead
product of the attaching map in the previous case, with the red interval disjoint from $C$ in the diagram below.  Specifically,
the characteristic map will be a product $D^i \times I$ where the $D^i$ maps transversely to the red interval, and I can
be identified with the red interval.

{
\psfrag{C}[tl][tl][0.8][0]{$C$}
\psfrag{Cp}[tl][tl][0.8][0]{$C'$}
\psfrag{cvf}[tl][tl][0.8][0]{$\textcolor{red}{\singv(f)}$}
$$\includegraphics[width=10cm]{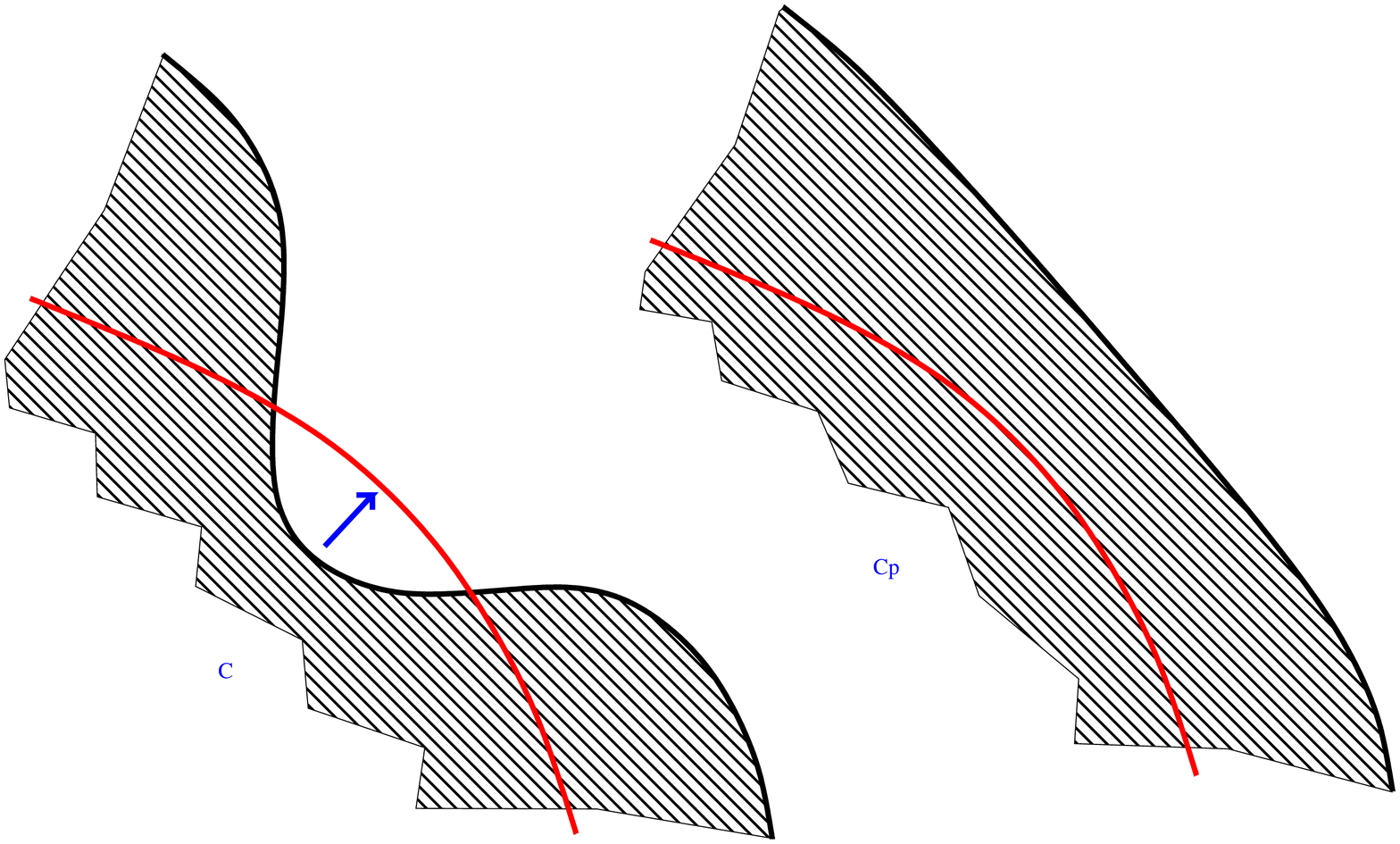}$$
\centerline{Case 1, engulfing}
}

The above figure indicates the rationale.  Specifically, $f^{-1}(C')$ is $f^{-1}(C)$ union a relative Bott-type
handle. This handle should be thought of as $I \times D^i \times D^{m-i-1}$ where $(v,i,j)$ is the index of
$\singv(f)$.  This is because $\pi_v \circ f$ is a Bott-style Morse function on $f^{-1}(B)$ (see the figure below).
The function $\pi_v : \Real^2 \to \Real \cdot v$ is orthogonal projection onto the line spanned by $v$.
The 'box' $B$ is diffeomorphic to a product $B \simeq I \times I$ where the first interval factor corresponds to
the red arc of $\singv(f)$ disjoint from $C$ in the figure below, while the second interval $I$ is in the transverse
direction (i.e. can be taken to be parallel to $v$).  Thus $f^{-1}(B)$ is an interval cross an $i$-handle, being attached
to $f^{-1}(C)$ along $\left(I \times \partial D^i\right) \cup \left((\partial I) \times D^i\right)$, i.e. $\partial(I \times D^i)$.
This could be thought of as an unbased version of a Whitehead product.

For details on Bott-style Morse functions, and how they give disc-bundle adjunctions for manifolds
see the paper of Bott \cite{Bott} (below item 3.6).  For a gentler introduction, see \cite{BH}.

{
\psfrag{C}[tl][tl][0.8][0]{$C$}
\psfrag{B}[tl][tl][0.8][0]{$B$}
\psfrag{cvf}[tl][tl][0.8][0]{$\textcolor{red}{\singv(f)}$}
$$\includegraphics[width=6cm]{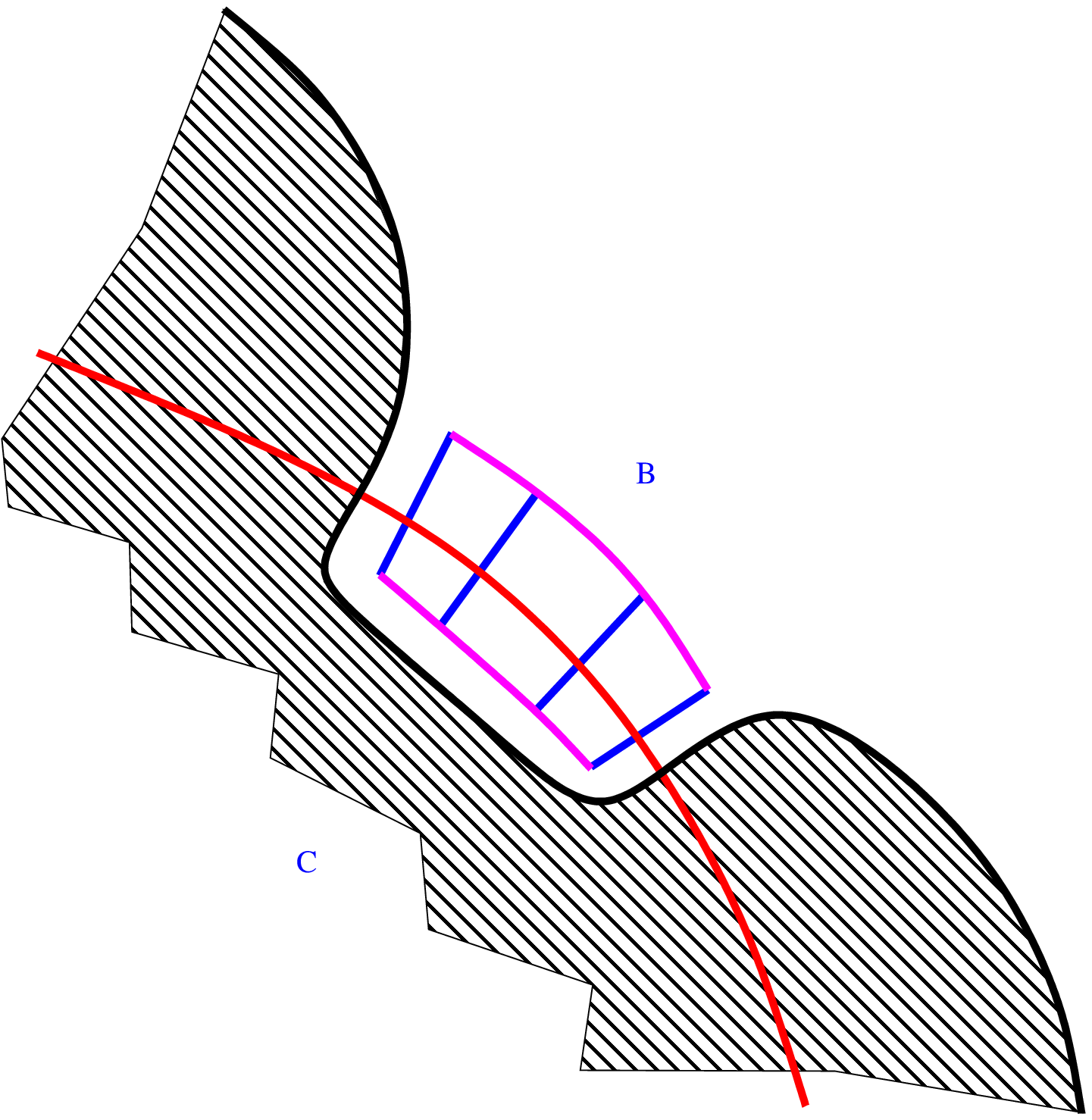}$$
\centerline{Case 1, engulfing}
}

Case 2 is the case where the boundary of $C$ passes over a cubic point.  We will see that the homotopy-type of
$f^{-1}(C)$ does not change in this instance.  Like Case 1 there is are 'engulfing' and 'non-engulfing' sub-cases.
We restrict to the non-engulfing case, as the engulfing case is similar.   The main idea of the proof is that this
transition corresponds to a $1$-parameter family of cancelling $i$ and $(i+1)$-handle attachments, thus we are attaching
a ball along a hemi-sphere, which results in no change in the homotopy-type.

{
\psfrag{C}[tl][tl][0.8][0]{$C$}
\psfrag{CF}[tl][tl][0.8][0]{$\textcolor{red}{\singv(f)}$}
$$\includegraphics[width=12cm]{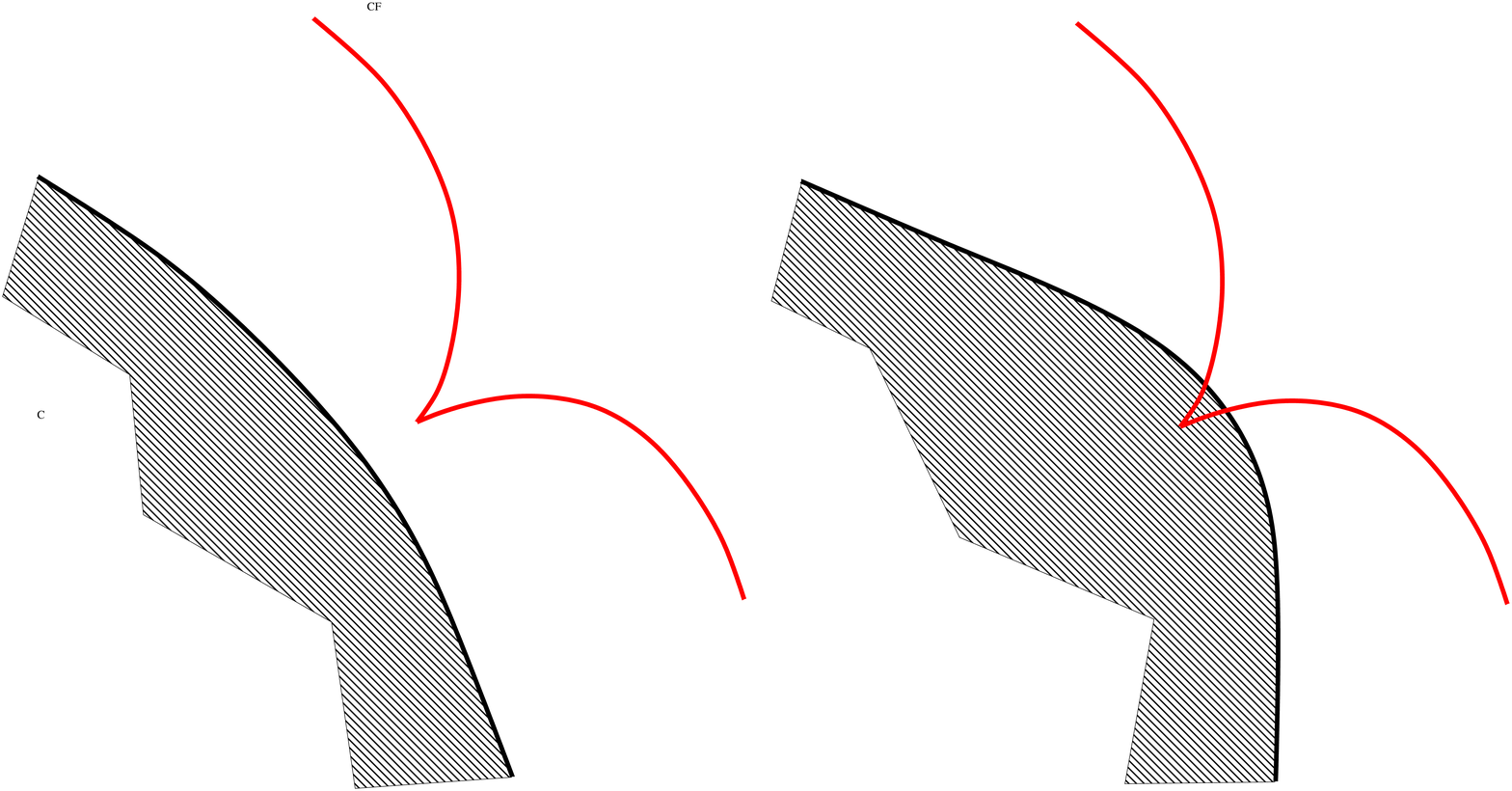}$$
\centerline{Case 2}
}
\end{proof}

We should point out that Case 2 has one special case that does result in a homotopy-type change.  This is depicted in
Figure \ref{int2} (B).  These types of $2$-Morse functions are not generic. A small isotopy of $f$ allows one to ensure the
tangent vectors at the cusps are neither vertical or horizontal, i.e. this at least codimension $1$ in the
space of smooth functions $M \to \Real^2$, thus this situation is avoidable.

Theorem \ref{mainthm} allows us to draw the singular point set of the filtration $M_f$ from the singular values
of $f$, allowing automatic deduction of Examples \ref{1-morse}, \ref{spherepocket}, and Figure~\ref{kleinbot}.

\begin{proposition} \label{prop:char-crit-values}
We summarize the cell attachments at the singular values of the filtration $M_f$.
\begin{itemize}
\item For corner singular points, the index of the cell attachment for $M_f$ is the same as the index for $f$.
\item For tail singular points, if the singular values of $f$ near the kissing tangency are
exterior to the cube, then the cell attachment has the same index as $f$. This corresponds to Wan terminating
a Pareto arc with a positive sign \cite{Wan}.
\item For a tail singular point, if the singular values of $f$ near the
kissing tangency are in the interior of the cube then the index of the cell attachment is one greater than that of
the corresponding singular value for $f$. This correspond's to Wan terminating a Pareto arc with a negative sign \cite{Wan}.
\end{itemize}
\end{proposition}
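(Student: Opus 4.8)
The plan is to derive all three items directly from the proof of Theorem~\ref{mainthm}, after matching the local picture of a corner or a tail singular point of $M_f$ with one of the elementary moves analyzed there. I keep throughout the standing hypotheses of Theorem~\ref{mainthm}: that $\pi_1$ and $\pi_2$ restrict to Morse functions with distinct critical values on the fold locus of $f$, so that at each singular value of $M_f$ there is exactly one elementary transition, and (by the genericity remark following that proof) that no tangency of $\partial C_{(a,b)}$ with $\singv(f)$ occurs at a cubic point. I also use freely that we may round off the corner of $C_{(a,b)}$ away from the relevant tangency or corner, producing a compact submanifold $C'_{(a,b)}\subset\Real^2$ with $f^{-1}(C'_{(a,b)})$ homotopy equivalent to $f^{-1}(C_{(a,b)})$ (Figure~\ref{roundit}); this reduces every transition of $M_f$ to an isotopy of a compact submanifold of $\Real^2$, the setting of the proof of Theorem~\ref{mainthm}. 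Finally recall that an index triple $(v,i,j)$ of a fold of $f$ singles out the integer $i$ once $v$ is co-oriented to point in the direction in which $\partial C_{(a,b)}$ advances across $\singv(f)$ -- equivalently, out of $C_{(a,b)}$; this is the meaning of ``the index for $f$'' in the statement.

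Consider first a tail singular point, say with its kissing tangency on the interior of a vertical edge of $C_{(a,b)}$. After rounding, as we cross the tail-singular locus the boundary $\partial C'_{(a,b)}$ sweeps over $\singv(f)$ in a regular tangency, analogous to a type-$2$ Reidemeister move; by the Morse hypothesis on $\pi_1|_{\singv(f)}$ this tangency is nondegenerate, so locally $\singv(f)$ is a parabolic arc lying on one side of the (straight) edge. If this arc is exterior to the cube, then after the move $\singv(f)$ departs $C'_{(a,b)}$ beyond the appended bigon -- exactly the non-engulfing sub-case of Case~1 in the proof of Theorem~\ref{mainthm} -- so the transition of $M_{(a,b)}$ is a cell attachment of index $i$, the index for $f$, proving the second item. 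If instead the arc is interior to the cube, then after the move $\singv(f)$ remains inside $C'_{(a,b)}$, the engulfing sub-case, so the transition is a cell attachment of index $i+1$, realized (as in that proof) by an unbased Whitehead-product-type attaching map $\partial(I\times D^i)\to f^{-1}(C_{(a,b)})$ with $I$-factor running along the engulfed arc of $\singv(f)$, proving the third item. The correspondence with Wan's signed Pareto arcs is then a comparison of conventions: the sign Wan records at a terminating end of a Pareto arc is the co-orientation of that arc relative to the advancing boundary $\partial C_{(a,b)}$, which is positive exactly in the exterior (non-engulfing) case and negative in the interior (engulfing) case.

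Now consider a corner singular point. In a neighbourhood of it, $\singv(f)$ is the graph of a strictly decreasing function: it is disjoint from $C_{(a,b)}$ on the side of the corner-singular arc where the filtration is smaller, and meets $C_{(a,b)}$ in a short sub-arc with one endpoint on each edge of $\partial C_{(a,b)}$ on the larger side. After rounding the corner, the smooth convex boundary $\partial C'_{(a,b)}$ sweeps over this sub-arc with a single intermediate tangency -- the instant its tangent line attains the slope of $\singv(f)$ there -- so this too is a Case~1 move, and since beyond the two resulting crossings $\singv(f)$ leaves $C'_{(a,b)}$ through the two distinct straight edges, it is the non-engulfing sub-case. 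Hence the transition of $M_{(a,b)}$ is a single cell attachment of index $i$, the index for $f$, which is the first item. Equivalently one may argue classically: by the flow technique of Lemma~\ref{mainlem} one isotopes $C_{(a,b)}$ near the corner, keeping its boundary transverse to $\singv(f)$, onto a half-plane sublevel set of a suitable linear functional $w$; the transition across the corner-singular arc is then the passage across a critical value of $w\circ f$, a handle attachment of index $i$ by Theorem~3.2 of~\cite{Mil}.

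I expect the corner case to require the most care: one must verify that the rounded-corner sweep -- equivalently, the isotopy of $C_{(a,b)}$ near the corner onto a linear half-plane -- can be carried out with $\partial C'_{(a,b)}$ transverse to $\singv(f)$ at all intermediate times, introducing no spurious tangencies and never brushing an endpoint of the sub-arc of $\singv(f)$ lying inside $C_{(a,b)}$, so that Lemma~\ref{mainlem} and its generalizations apply and the transition really is the single classical handle attachment of the stated index. Granting this, identifying the co-orientation $v$ with the entry $i$ of the fold index $(v,i,j)$ is routine, and the three items follow.
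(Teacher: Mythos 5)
Your proposal is correct and matches the paper's intent: the paper gives no standalone proof of Proposition \ref{prop:char-crit-values}, presenting it explicitly as a summary of the proof of Theorem \ref{mainthm}, and your case-matching (corner and exterior-tangency tail points to the non-engulfing move of Case 1, interior-tangency tail points to the engulfing move, hence indices $i$, $i$, and $i+1$ respectively) is exactly the intended derivation. Your flagged concern about keeping the corner-rounding isotopy transverse to $\singv(f)$ is legitimate but is handled in the paper only at the same informal level (Figure \ref{roundit} and the remarks after Lemma \ref{mainlem}), so it is not a gap relative to the paper's own argument.
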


We should note, Wan \cite{Wan} gives a filtration of the manifold $M$ when $f : M \to \Real^2$ is $2$-Morse, provided the
$2$-Morse function satisfies the {\it no cycle} condition (see Proposition 6.3 \cite{Wan}). Central to Wan's construction
is the usage of flowlines of 'generalized gradient' vector fields -- roughly these are vector fields where both coordinates
are increasing (away from the Pareto points).  When one has a cycle, one can loop endlessly between Pareto points, but
when there are no cycles, the process of connecting Pareto points via paths of generalized gradients exhausts the manifold
$M$ and linearly orders the critical intervals of Pareto sets.
In our work there are a multitude of filtrations whether or not
$f$ has cycles.  All the examples provided so far in this paper (and all examples in Wan's work \cite{Wan}) satisfy
the no cycle condition.

The simplest example of a 2-Morse function with a cycle in Wan's sense is a function of the form
$f : S^1 \times D^2 \to \Real^2$ having two critical arcs of index $(1,1)$, with the critical arcs being
properly embedded in $S^1 \times D^2$.  There are generalized gradient flows on the endpoints
connecting the arcs in a cyclic ordering.  While this function is only defined on a manifold diffeomorphic
to $S^1 \times D^2$, with a little work one can embed this $2$-Morse function into a closed $3$-manifold,
but one needs to add additional critical values.  There is a rather simple cyclic example if one allows
the use of $2$-Morse functions of the sort $f : S^3 \to S^2$.    We obtain this map as the composite of
the $2$-sheeted branched cover $S^3 \to S^3$ over the Hopf link together with the Hopf fibration $S^3 \to S^2$,
provided the Hopf fibration projection of the Hopf link is a $2$-crossing diagram in $S^2$.


\section{Persistence paths and path-wise barcodes}
\label{sec:per-paths}

In a $1$-D persistent homology, barcodes represent collections of parameter intervals at which homology generators are born and killed. In multi-filtered persistent homology, in particular, in our $2$-D case, there is no simple barcode analogy, and, as Carlsson et al. pointed out in \cite{CZ09}, there is no complete discrete invariant. Many authors have studied {\em rank invariants} in a module theory setting \cite{CZ09,Les19}. A somewhat more elementary notion of {\em persistent Betti number (PBN) functions} is presented in Cerri et al. \cite[Definition 2.2]{CDFFC}. These are collections of functions $\{\beta_{f,q}: \Delta^+ \to N \cup\infty\}_{q\in \Zed}$,
\[
\beta_{f,q}((a,b),(a',b'))=\rank H_q(i^{((a,b),(a',b'))})
\]
where
\[
\Delta^+=\{((a,b),(a',b'))\in \Real^2\times \Real^2 \mid (a,b)\preceq(a',b')\},
\]
and
$i^{((a,b),(a',b'))}:M_{(a,b)}\hookrightarrow M_{(a',b')}$ is the inclusion of sublevel sets.

\medskip

For computational purposes, the authors of \cite{CDFFC} use a reduction to one-dimensional persistence diagrams via so called {\em foliation method}. It consists of applying the one-dimensional rank invariant along the lines defined by positive coordinate vectors in chosen finite grids. That method is restated as a {\em fibered barcode} in the context of persistence modules by Lesnick in \cite[Definition 13.6]{Les19}.

As we observed in Section~\ref{sec:fold-cub-sing} on our 2-Morse function examples, although there are uncountably many singular points, the changes in topology can be finitely characterised. They either occur when we cross an arc of the singularity $\singv$ in the poset-increasing direction, or when we cross a horizontal or vertical half-line passing through the vertex $(a,b)$ of $C_{(a,b)}$ and `kissing' the singularity. We will refer to both types of components of $\critv$ as to
{\em Pareto critical value arcs} or, for short, {\em Pareto arcs}. Note that in \cite{Wan}, the term {\em Pareto set} refers to a subset of $\singp\subset M$ and {\em critical intervals} to its components, while our Pareto arcs are the corresponding subsets of $\critv\subset \Real^2$.
There are finitely many homotopically distinct paths, with $M_{(a,b)}$ starting with an empty set and ending with the whole manifold. Each one can give a different sequence of handle attachments creating new generators of homology or cancelling previous ones, all giving $H_*(M)$ at the end of the day.
\medskip

This observation leads to the notion of persistence paths which is a substitute for either Cerri's foliation method \cite{CDFFC} or Lesnick's fibered barcode \cite{Les19}. It can be also be viewed as a discrete analogy of Wan's generalized gradient (whose choice is also not unique) in \cite{Wan}.
Before we proceed, let us introduce some terminology. As far as rank invariants or persistent Betti numbers are of concern, a convenient way to record the homological information carried in sublevel sets is the {\em Poincar\'e polynomial}
\[
P(t,M)=\sum_{k=0}^{n}\beta_k \; t^k,
\]
where $\beta_k=\rank H_k(M)$ and $n$ is the dimension of $M$.

\begin{figure}
\psfrag{i0}[tl][tl][0.7][0]{$(0,1)$}
\psfrag{i1}[tl][tl][0.7][0]{$(1,0)$}
\psfrag{h0}[tl][tl][0.6][0]{$H^0$}
\psfrag{h1}[tl][tl][0.6][0]{$H^1$}
\psfrag{h2}[tl][tl][0.6][0]{$H^2$}
\psfrag{p0}[tl][tl][0.5][0]{$0$}
\psfrag{p1}[tl][tl][0.5][0]{$1$}
\psfrag{p1t}[tl][tl][0.5][0]{$1+t$}
\psfrag{p2t}[tl][tl][0.5][0]{$2+t$}
\psfrag{p12t}[tl][tl][0.5][0]{$1+2t$}
\psfrag{p13t}[tl][tl][0.5][0]{$1+3t$}
\psfrag{p14t}[tl][tl][0.5][0]{$1+4t$}
$$\includegraphics[width=14cm]{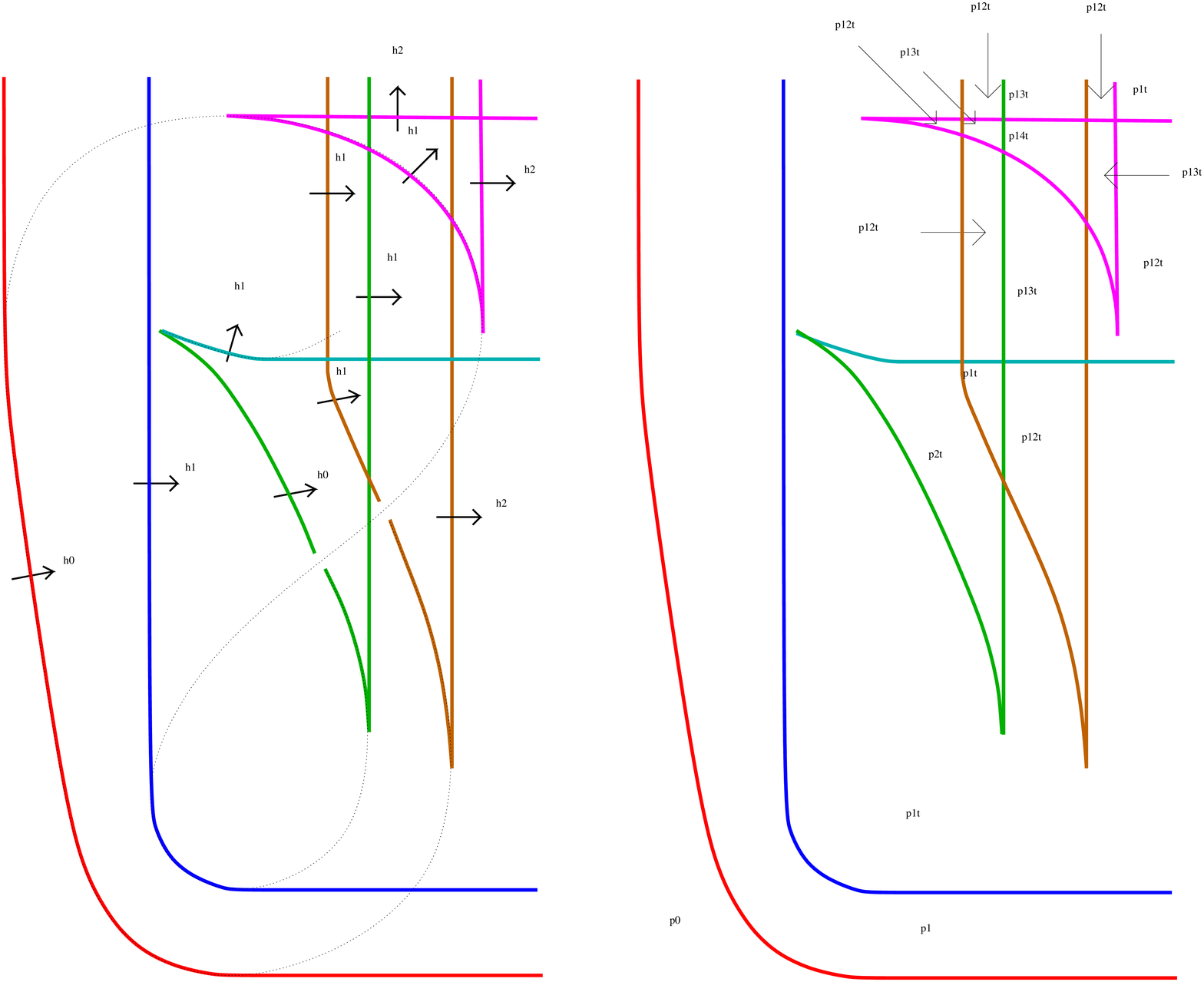}$$
\caption{Klein bottle projection, with Poincar\'e polynomials}\label{kleinproj2}
\end{figure}

In the figure of Example~\ref{spherepocket}, and Figure~\ref{kleinproj2} on the left, we see the Poincar\'e polynomials $P(t,M_{(a,b)})$ for points $(a,b)$ located in regions bounded by Pareto arcs. We are also interested in increments $\Delta P(t,H^j)$ arising as we cross a Pareto arc increasingly in $(a,b)$. A $j$-handle can either create a $j$ generator (new component, closing a hole or a cavity) or kill a $(j-1)$ generator (merging components, filling a hole or a cavity). In the first case, we get $\Delta P(t,H^j)=t^j$, in the second case we get $\Delta P(t,H^j)=-t^{j-1}$. Thus the index of a handle can be read out from $\Delta P$. If it is $t^k$, we have a crating $k$-handle and if it is $-t^k$, we have a cancelling $(k+1)$-handle.

\medskip

The term Pareto arc includes half-lines defined by quadrants $C_{(a,b)}$, Crossing their vertex $(a,b)$ may create
`multiple handles' where $\Delta P$ is not just one term. For example in the vertex of fish tail visible in
Figure~\ref{fig:per-path}, $\Delta P=-t+t^2$. A point at which a single handle is attached will be called {\em generic}.

\medskip

We choose a generic point $(a,b)$ on each Pareto arc and let $H_{(a,b)}$ be the corresponding handle. At this time, the choice is arbitrary but we may want to chose endpoints of an arc, when we study metric sensitive barcodes.

We let $T=[0,1]$ and $R=[r_1,R_1]\times [r_2,R_2]$ be a fixed rectangle in $\Real^2$ containing $f(M)$ in its interior.

\begin{definition}\label{def:per-path}
Let $\{(a_i,b_i)\}_{i=0,1,\ldots m}\subset R$ be a sequence of generic points on Pareto arcs, such that $M_{(a,b)}=\emptyset$ for all $(a,b)$ downward-left of $(a_0,b_0)$, $(a_{i+1},b_{i+1})$ can be reached from $(a_i,b_i)$ going upward-right through the region enclosed by the two arcs, and $M_{(a_m,b_m)}=M$. A {\em persistence path} is a continuous function $\rho:I\to R$ with $\rho(0)=(r_1,r_2)$, $\rho(1)=(R_1,R_2)$ which is non-decreasing in both coordinates, and joins the points of the sequence.
\end{definition}

\begin{figure}
\psfrag{c10}[tl][tl][0.5][0]{$(1,0)$}
\psfrag{c01}[tl][tl][0.5][0]{$(0,1)$}
\psfrag{h0}[tl][tl][0.5][0]{$H^0$}
\psfrag{h1}[tl][tl][0.5][0]{$H^1$}
\psfrag{h2}[tl][tl][0.5][0]{$H^2$}
\psfrag{p0}[tl][tl][0.5][0]{$0$}
\psfrag{p1}[tl][tl][0.5][0]{$1$}
\psfrag{p2}[tl][tl][0.5][0]{$2$}
\psfrag{p1t}[tl][tl][0.5][0]{$1+t$}
\psfrag{p1t2}[tl][tl][0.5][0]{$1+t^2$}
$$\includegraphics[width=12cm]{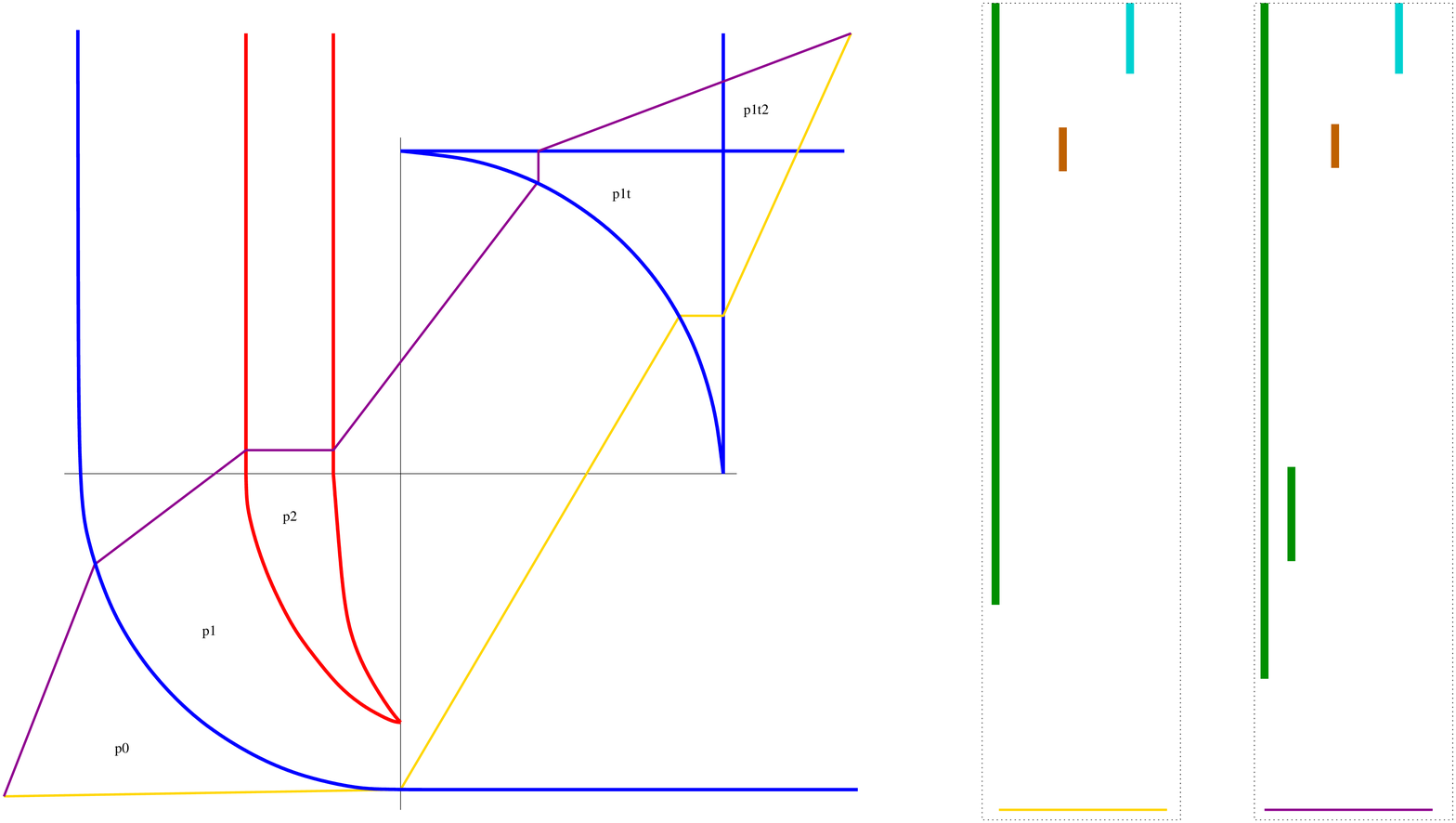}$$
\caption{Left: Two persistence paths for the function in Example~\ref{spherepocket}. Right: Their corresponding barcodes in rectangles marked with the same color as the corresponding path. $\beta_0$ barcodes displayed by green lines, $\beta_1$ by brown lines, and $\beta_2$ by cyan lines.}\label{fig:per-path}
\end{figure}

\medskip

It can be seen that one can find sequences on the arcs so to get piecewise linear persistence (PL) paths with line segments between two consecutive points. This is useful in showing that we get a discrete characterization. For simplicity of notation, we let $H_i=H_{(a_i,b_i)}$ and $M_i=M_{(a_i,b_i)}$. We have a linear filtration $M_0 \subset M_1 \subset \cdots \subset M_m=M$.

\medskip

Figure~\ref{fig:per-path} (left) shows two persistence paths for the cupped sphere presented in Example~\ref{spherepocket}. The path displayed in dark green avoids the pocket, the one in orange passes through it.

\medskip

Note that \cite{Wan} needs a no-cycle condition to apply the generalized gradient. Our persistence paths can be defined even in the presence of Wan's cycles, because they are more restrictive than Wan's admissible curves \cite[Definition 6.3]{Wan}: A persistence path leaves a Pareto arc at the same point as it enters it. Along the path, there may be no cycles, because it is increasing with respect to the poset relation.

\medskip

We now shift our attention to a multidimentional analogy of the Morse inequalities. Our results may be useful for the continuation of the work on the discrete multidimesional Morse-Forman theory initiated in \cite{AlKaLaMa19}.

The following result is an analogy of the Morse equation in the Conley index theory \cite{RybZeh85,Mro91}.

\begin{theorem}[Morse-Conley equation for persistence paths]\label{th:morese-conley-eq}
Let $\rho$ be a persistence path for $((a_i,b_i))_{i=0,1,\ldots m}$ and let $c_j$ be the number of $j$-handles associated to its points. Then there exists a polynomial $Q$ with non-negative integer coefficients such that
\begin{equation}\label{eq:morse-conley}
\sum_{j=0}^{n} c_j\; t^j = P(t,M) + (1+t)Q(t)
\end{equation}
\end{theorem}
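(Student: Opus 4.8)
The plan is to reduce the statement to the classical Morse–Conley equation for a \emph{filtered} space, applied to the linear filtration $M_0 \subset M_1 \subset \cdots \subset M_m = M$ that the persistence path $\rho$ determines. By Theorem~\ref{mainthm} and the genericity assumption on $\rho$ (each $(a_i,b_i)$ is a generic point on a Pareto arc), passing from $M_{i-1}$ to $M_i$ is, up to homotopy equivalence, a single cell attachment of some index $d_i$, where $d_i$ is determined by the type of the Pareto arc and the oriented signature invariant as catalogued in Proposition~\ref{prop:char-crit-values}. In particular $c_j = \#\{ i : d_i = j\}$. So the left-hand side of \eqref{eq:morse-conley} is exactly $\sum_i t^{d_i}$, the "Morse polynomial" of the filtration.

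First I would set up the relative homology of the consecutive pairs. Since $M_i \simeq M_{i-1}$ with a $d_i$-cell attached, we have $H_k(M_i, M_{i-1}) \cong \mathbb{Z}$ if $k = d_i$ and $0$ otherwise; write $m_k := \sum_i \dim_{\mathbb{F}} H_k(M_i,M_{i-1})$ over a field $\mathbb{F}$, so $m_k = c_k$. Next I would invoke the standard additivity/sub-additivity argument for long exact sequences of a filtration (the same bookkeeping as in the proof of the classical Morse inequalities, e.g.\ in \cite{Mil} or the Conley-index version in \cite{RybZeh85,Mro91}): from the long exact sequences of the pairs $(M_i, M_{i-1})$ one extracts, for each $k$, the inequality $m_k - m_{k-1} + \cdots \pm m_0 \ge \beta_k - \beta_{k-1} + \cdots \pm \beta_0$, where $\beta_k = \dim_{\mathbb F} H_k(M)$, together with $\sum_k (-1)^k m_k = \sum_k (-1)^k \beta_k$. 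These are precisely the conditions equivalent to the existence of a polynomial $Q$ with non-negative integer coefficients such that
\begin{equation*}
\sum_{j=0}^n m_j\, t^j = \sum_{j=0}^n \beta_j\, t^j + (1+t) Q(t).
\end{equation*}
I would include the short algebraic lemma (it is classical, going back to the Morse–Conley literature) that the chain of inequalities plus the Euler-characteristic equality is equivalent to the existence of such a $Q$; concretely $Q(t) = \sum_k q_k t^k$ with $q_k = m_k - \beta_k - q_{k-1}$, and one checks $q_k \ge 0$ by induction using the inequalities, and $q_n = 0$ using the Euler equality. Since $m_j = c_j$ and $\sum_j \beta_j t^j = P(t,M)$, this is exactly \eqref{eq:morse-conley}.

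The only genuine point requiring care — and the main obstacle — is the handling of the \emph{non-generic} vertex points where $\Delta P$ has more than one term (the "multiple handle" situation mentioned just before Definition~\ref{def:per-path}, e.g.\ $\Delta P = -t + t^2$ at the fishtail vertex). The statement is phrased for a path through \emph{generic} points, so strictly one may take $c_j$ to count the single handle at each step and the argument above applies verbatim; but to be safe I would note that a non-generic vertex can always be resolved by perturbing the path into a sequence of nearby generic crossings (each giving one handle), which does not change the final space $M$ nor, by the additivity just used, the validity of the equation — one simply sums the contributions. A second, more mundane point is verifying that the filtration really can be taken so that $M_0 = \emptyset$ (equivalently $H_*(M_0) = 0$), which is guaranteed by the condition in Definition~\ref{def:per-path} that $M_{(a,b)} = \emptyset$ downward-left of $(a_0,b_0)$; this is what makes the alternating sums "start clean" so that the inductive definition of $Q$ has non-negative coefficients. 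With these two observations in place the proof is a direct transcription of the classical argument.
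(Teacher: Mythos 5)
Your argument is correct, but it takes a genuinely different route from the paper's. The paper's proof is a direct telescoping computation with Poincar\'e polynomials: summing the increments $\Delta P(t,H_{(a_i,b_i)})$ along the path yields $P(t,M)$, and since a single $j$-handle contributes either $+t^j$ (creating a generator) or $-t^{j-1}$ (killing one), each killing handle accounts for a discrepancy of exactly $t^j+t^{j-1}=(1+t)\,t^{j-1}$ between $\sum_j c_j t^j$ and $P(t,M)$; the polynomial $Q$ is therefore produced explicitly as the sum of $t^{j-1}$ over the cancelling handles. You instead run the classical homological-algebra argument: compute $H_*(M_i,M_{i-1})$ from the cell attachment, derive the strong Morse inequalities and the Euler-characteristic identity by subadditivity of alternating sums over the long exact sequences, and then invoke the algebraic equivalence of that system with the existence of a non-negative $Q$ via the recursion $q_k=c_k-\beta_k-q_{k-1}$. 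Both are sound, but note that the paper deliberately reverses your logical order: it proves the polynomial identity first by the elementary create/kill bookkeeping and only afterwards deduces the strong inequalities as Corollary~\ref{cor:strong-morse} by multiplying by the power-series inverse of $(1+t)$. So your proof front-loads the harder inequalities that the paper obtains for free later. What your route buys is independence from the create/kill dichotomy (you only use the rank of the relative homology of consecutive stages, which is marginally more robust); what the paper's route buys is brevity and an explicit, handle-by-handle description of $Q$. Your closing remarks on non-generic vertices and on the filtration starting from the empty set are sensible precautions but are not needed for the statement as phrased, since Definition~\ref{def:per-path} already restricts to generic points on the Pareto arcs.
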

\proof The following equation is a direct consequence of the definition of $\Delta P$ and that of persistence path:
\begin{equation}\label{eq:prop-sums}
\sum_{i=0}^{m}\Delta P(t,H_{(a_i,b_i)}) = \sum_{k=0}^{n}\beta_k\; t^k=P(t,M).
\end{equation}
If all handles of $\rho$ create new generators, than, in the light of the preceding discussion, the left-hand side of equation~\ref{eq:prop-sums} is exactly the left-hand side of equation~\ref{eq:morse-conley}. Thus (\ref{eq:morse-conley}) holds with $Q(t)=0$. If a $j$-handle kills a $(j-1)$ generator, then the sum on the left-hand side of (\ref{eq:prop-sums}) misses two terms $t^{j-1}$ and $t^j$ contributing the sum on the left-hand side of (\ref{eq:morse-conley}). By adding all these missing terms to both sides of (\ref{eq:prop-sums}), we get (\ref{eq:morse-conley}) with $Q$ built by terms $t^{j-1}+t^j=(1+t)t^{j-1}$.
\qed

\medskip

By taking $t=-1$ in (\ref{eq:morse-conley}), we get the following corollary.

\begin{corollary}[Euler characteristics]\label{cor:morse-euler}
For any persistence path $\rho$,
\begin{equation}\label{eq:euler}
\sum_{j=0}^{n} (-1)^j \; c_j = \chi(M),
\end{equation}
where $\chi(M)=\sum_{k=0}^{n} (-1)^k \; \beta_k$ is the Euler-Poincar\'e characteristics of $M$.
\end{corollary}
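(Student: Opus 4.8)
The plan is to specialize the Morse-Conley equation of Theorem~\ref{th:morese-conley-eq} at the value $t=-1$. First I would take equation~(\ref{eq:morse-conley}), which holds as an identity of polynomials in $t$, and evaluate both sides at $t=-1$. On the left-hand side this produces $\sum_{j=0}^{n}(-1)^j c_j$, exactly the quantity appearing on the left of~(\ref{eq:euler}). On the right-hand side, the factor $(1+t)$ multiplying $Q(t)$ becomes $1+(-1)=0$, so the entire correction term $(1+t)Q(t)$ vanishes regardless of the (non-negative integer) coefficients of $Q$; this is the crucial point, and it is the whole reason the alternating sum of handle counts is a topological invariant rather than path-dependent. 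What remains on the right is $P(-1,M)$.

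Second, I would identify $P(-1,M)$ with $\chi(M)$. By the definition of the Poincar\'e polynomial recalled above, $P(t,M)=\sum_{k=0}^{n}\beta_k t^k$ with $\beta_k=\rank H_k(M)$, so $P(-1,M)=\sum_{k=0}^{n}(-1)^k\beta_k$, which is precisely the definition of the Euler-Poincar\'e characteristic $\chi(M)$ given in the statement. Combining the two evaluations yields~(\ref{eq:euler}).

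There is essentially no obstacle here: the corollary is a formal consequence of the polynomial identity in Theorem~\ref{th:morese-conley-eq}, and the only property of $Q$ being used is that it is a genuine polynomial, so that $(1+t)Q(t)$ is divisible by $1+t$ and hence vanishes at $t=-1$ — and this is already part of the conclusion of that theorem. One might additionally remark, as a sanity check, that the independence of the right-hand side of~(\ref{eq:euler}) from the choice of persistence path $\rho$ recovers the familiar fact that the Euler characteristic can be computed from any Morse-type decomposition: the individual counts $c_j$ do depend on $\rho$, but their alternating sum does not.
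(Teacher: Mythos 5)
Your proof is correct and is exactly the paper's argument: the paper obtains the corollary by substituting $t=-1$ into equation~(\ref{eq:morse-conley}), which kills the $(1+t)Q(t)$ term and leaves $P(-1,M)=\chi(M)$. You have simply spelled out the same one-line evaluation in more detail.
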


Equation~\ref{eq:euler} is a part of the set of classical Morse inequalities. Since two polynomials are equal is and only if all its coefficients are equal, (\ref{eq:morse-conley}) also gives {\em weak Morse inequalities}~:
\begin{equation}\label{eq:weak-morse}
c_j\geq \beta_j~~ \mbox{ for all } j=1,2,\ldots n.
\end{equation}

\medskip

We conclude this section by deriving a classical result of Morse theory on {\em strong Morse inequalities}. The reader is referred to the book by Milnor~\cite{Mil} for the classical formulation. For the sake of completeness, we present a neat and short proof of an unknown source we have been told about by Marian Mrozek.

\begin{corollary}[Strong Morse inequalities]\label{cor:strong-morse}
For any persistence path $\rho$ and any $k\geq 0$,
\begin{equation}\label{eq:strong-morse}
c_k - c_{k-1} + c_{k-2} + \cdots \pm c_0 \geq \beta_k - \beta_{k-1} +\beta_{k-2} +\cdots \pm \beta_0.
\end{equation}
\end{corollary}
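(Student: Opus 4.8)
The plan is to derive the strong Morse inequalities directly from the Morse-Conley equation \eqref{eq:morse-conley}, following the slick polynomial argument attributed to Mrozek. The starting point is the identity
\[
\sum_{j=0}^{n} c_j\, t^j - \sum_{j=0}^{n} \beta_j\, t^j = (1+t)Q(t),
\]
where $Q$ has non-negative integer coefficients. First I would introduce the ``partial alternating sum'' polynomials
\[
S_c(t) = \sum_{k\ge 0} \left(\sum_{j=0}^{k} (-1)^{k-j} c_j\right) t^k, \qquad S_\beta(t) = \sum_{k\ge 0} \left(\sum_{j=0}^{k} (-1)^{k-j} \beta_j\right) t^k,
\]
noting the elementary fact that multiplication of a polynomial $\sum a_j t^j$ by $\frac{1}{1+t}$ (formally, as a power series, or after clearing denominators) encodes exactly these alternating partial sums: concretely $(1+t)\sum_k b_k t^k = \sum_j a_j t^j$ has the solution $b_k = \sum_{j=0}^{k}(-1)^{k-j} a_j$.

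Next I would rewrite \eqref{eq:morse-conley} by dividing through by $(1+t)$ in the ring of formal power series (equivalently, multiply the target identity by $(1+t)$ and match coefficients). This yields
\[
S_c(t) - S_\beta(t) = Q(t).
\]
Since $Q(t)$ has non-negative coefficients, comparing the coefficient of $t^k$ on both sides gives
\[
\left(c_k - c_{k-1} + \cdots \pm c_0\right) - \left(\beta_k - \beta_{k-1} + \cdots \pm \beta_0\right) = q_k \ge 0,
\]
which is precisely \eqref{eq:strong-morse}. One small bookkeeping point to handle carefully is that the $c_j$ and $\beta_j$ are supported in degrees $0,\dots,n$ while $Q$ may a priori have degree up to $n-1$; I would check that the top-degree coefficients are consistent (the coefficient of $t^{n+1}$ forces a relation, which is automatically satisfied, and in fact recovers the Euler characteristic identity of Corollary~\ref{cor:morse-euler} when $k$ is taken large).

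I do not expect a serious obstacle here: the only genuine content has already been established in Theorem~\ref{th:morese-conley-eq}, and what remains is the formal manipulation of dividing a polynomial identity by $(1+t)$ and reading off that the quotient has non-negative coefficients. The one place requiring a moment's care is making the division by $(1+t)$ rigorous --- I would phrase it either as an identity of formal power series in $\Zed[[t]]$ (where $1+t$ is a unit), or, to stay entirely within polynomials, by an explicit induction on $k$ showing $\sum_{j=0}^{k}(-1)^{k-j}(c_j-\beta_j) = q_k$ where $Q(t)=\sum_k q_k t^k$, using $q_k + q_{k-1} = c_k - \beta_k$ which is just the coefficient identity coming from $(1+t)Q(t) = \sum (c_j-\beta_j)t^j$. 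Either route is routine; the inductive version has the mild advantage of being self-contained and transparently valid over $\Zed$.
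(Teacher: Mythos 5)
Your proposal is correct and follows essentially the same route as the paper: both divide the Morse--Conley identity by $(1+t)$ in the ring of formal power series (equivalently, multiply by $\sum_{i\ge 0}(-1)^i t^i$), recognize the resulting coefficients as the alternating partial sums, and conclude from the non-negativity of the coefficients of $Q$. The only difference is cosmetic bookkeeping (your indexing $\sum_{j=0}^{k}(-1)^{k-j}c_j$ versus the paper's $\sum_{i=0}^{k}(-1)^i c_{k-i}$, and your optional self-contained induction), so no further comparison is needed.
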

\medskip

\proof 
Knowing that $c_j=\beta_j=0$ for all $j > n$, we can treat Equation~(\ref{eq:morse-conley}) as a power series equation
\begin{equation}\label{eq:morse-conley-pewer-series}
\sum_{j=0}^{\infty} c_j\; t^j = \sum_{j=0}^{\infty}\beta_j\; t^j + (1+t)Q(t).
\end{equation}
Multiplying both sides of (\ref{eq:morse-conley-pewer-series}) by $\Sigma_{i=0}^\infty (-1)^i t^i$, the power series inverse of $(1+t)$, we get
\[
\sum_{k=0}^{\infty} \left( \sum_{i=0}^{k} (-1)^i c_{k-i} \right) t^k = \sum_{k=0}^{\infty} \left( \sum_{i=0}^{k} (-1)^i \beta_{k-i} \right) t^k + Q(t).
\]
Since two power series are equal if and only if all their coefficients are equal and the coefficients of $Q(t)$ are non-negative, we get (\ref{eq:strong-morse}).
\qed

\medskip

Our Morse inequalities should be compared with the work of Wan \cite{Wan}.  Perhaps the main difference between our work
and his, is that we convert functions $f : M \to \Real^2$ into families of filtrations of the manifold $M$.  Wan uses
essentially all of the Pareto arcs to define his filtration, which is often larger than our filtrations.  Moreover he
requires special 'acyclic' Morse 2-functions to even define a filtration of $M$, while any Morse $2$-function works
for us.



\medskip

We now turn our attention to the computability of persistent homology via persistence paths. We associate {\em path-wise barocodes} to any persistence path $\rho$ as follows. First, we want to normalise lengths of persistence paths so to have them all of length $1$. Given a point $(a,b)\in \rho(I)$, let $s(a,b)$ be the euclidean distance from $(r_1,r_2)$ to $(a,b)$ along the path $\rho$ divided by the total length of $\rho$.

\begin{definition}\label{defn:path-barcode}
The {\em $\rho$-barcode} in homology of dimension $k$ is a function on representatives of the $H_k$ generators, whose values are subintervals of $[0,1]$. When a $H_k$ generator is born by a handle attachment at the point $(a_i,b_i)$ and it is killed at the point $(a_j,b_j)$ with $i<j<m$, the corresponding barcode interval is $[s(a_i,b_i),s(a_j,b_j)]$. The {\em life-time} of that generator is $s(a_j,b_j)-s(a_i,b_i)$. If a generator persists until the point $(R_1,R_2)$ of the chosen rectangle, it will also persist if the values of $(R_1,R_2)$ increase. Thus it is reasonable to declare that its life-time is infinite and the corresponding barcode interval is $[s(a_i,b_i),\infty)$.
\end{definition}

Figure~\ref{fig:per-path} (right) shows barcodes of the two persistence paths displayed on the right. It is visible that the life-time of the second generator of $H_0$ created when crossing the pocket is short and it may be null, if we choose the path in dark green that avoids the pocket. Similarly, the life-time of the $H_1$ generator is short.

\medskip

We shall now briefly discuss prospects for numerical implementations of path-wise barcodes. We should emphasize that the aim of our paper is to only provide a theoretical background for computation.

\medskip

Following predecessors \cite{CZ09,CDFFC} who set up computing methods for multi-filtrations, we should consider the family of all piece-wise linear persistence paths $\rho$ built on points $(a_i,b_i)$ in a given finite grid. However, that is a huge family and this choice is likely to lead to computational complexity issues. The size of the family of such paths is most likely similar to that of {\em Young diagrams}~\cite{WF}. Moreover, the number of nodes to join by paths, decisive for the size of the family, increases quadratically with grid subdivisions.

\medskip

For path-wise barcodes, we postulate that it should be sufficient to consider a representable family  $\rep(f)$ of persistence paths built of specific points on Pareto curves: center points, nearly lower-right and upper-left endpoints of Pareto curves, as well as their intersections with horizontal and vertical lines passing through or touching other endpoints. We claim that $\rep(f)$ is a small and exhaustive representation. Moreover, the size of $\rep(f)$ does not increase with grid subdivisions.

\medskip

By {\em exhaustive representation}, we mean here that any additional paths give rise to {\em equivalent barcodes}. That, in turn, means that their barcodes have the same number of intervals for each homology dimension, they may vary by length but appear in the same sequence according to birth and death dates.

\medskip

We are conscious of the fact that, proceeding this way, we are missing the postulate that the persistence should be computed blindly from data, without knowing the exact manifold $M$ and exact function $f$. But it may also be interesting to consider the case when we have $M$ and $f$ given by formulas that enable computing singularities.

\section{Extensions}
\label{sec:ext}

When applying path-wise barcodes to functions which do not satisfy the Wan's no-cycle property \cite[Definitions 6.4 and 6.5]{Wan}, it would be interesting to see what is the information carried by the barcodes of those persistence paths of $\rep(f)$ which cross and go about the cycles of $f$.

\medskip

The filtration of $\Real^2$ by quadrants $C_{(a,b)}$ has a complementary filtration by quadrant
exteriors

$$E_{(a,b)} = \{ (x,y) \in \Real^2 : x \geq a \text{ or } y \geq b \}.$$

Provided the boundary of $C_{(a,b)}$ is transverse to the singular points of $f : M \to \Real^2$,
one has that $f^{-1}(C_{(a,b)})$ is a manifold with corners.  This allows us to use a Poincar\'e Duality isomorphism

$$H_k (f^{-1}(C_{(a,b)})) \simeq H^{m-k} (M, f^{-1}(E_{(a,b)})).$$

Given that quadrant exteriors are the union of three quadrants, this gives a fairly detailed relationship between
the persistent homologies of filtrations corresponding to the four quadrant families:

\[
C_{f_1\leq a,f_2\leq b}=C_{(a,b)},~~C_{f_1\geq a,f_2\leq b},~~ C_{f_1\leq a,f_2\geq b}, ~~ C_{f_1\geq a,f_2\geq b}.
\]

This technique could be thought to be a strong parallel to the theory of trisections of $4$-manifolds
\cite{GK-indefinite, GK-trisecting} as developed by Gay and Kirby.  It also gives a formal setup analogous to
{\em Extended Persistence} of Morse functions, considered in \cite{EH}.


\medskip

Another direction one could take to extrapolate this research would be using smooth functions $M \to \Real^k$ for $k > 2$. This topic is of a great interest to the topological data analysis community. The computational methods of multi-parameter persistent homology such as those in \cite{CZ09,CEFKL,AlKaLaMa19,Les19} are dimension-independent but, on the other hand, they do not have the same insight into the geometry of the encountered singularities as the one we present here for the $\Real^2$-valued functions. There are a variety of useful 'Morse theory' type tools to describe the singularities of functions of this kind.  The analogous theory of multisections of manifolds is developed by Rubinstein and Tillman \cite{RT}.

\medskip

Yet another direction to undertake is the practical implementation of our suggested method for computing path-wise barcodes on the basis of representable family $\rep(f)$.


\end{document}